\def\row#1/#2!{#1_{\IfStrEq{#2}{}{n}{#2}} & \dynkin{#1}{#2}\\}
\DeclareSymbolFontAlphabet{\mathbbm}{bbold}
\DeclareSymbolFontAlphabet{\mathbb}{AMSb}%
\numberwithin{equation}{section}
\theoremstyle{plain}
\newtheorem{theorem}{Theorem}[section]
\newtheorem{lemma}[theorem]{Lemma}
\theoremstyle{definition}
\newtheorem{defn}[theorem]{Definition}
\newtheorem{example}[theorem]{Example}
\theoremstyle{remark}
\newtheorem{remark}[theorem]{Remark}
\newcommand{\inv}{^{-1}}
\newcommand{\Z}{\mathbb{Z}}
                                                                                                                                                                                                                                                                                                                                                                                                                                                                                                                                                                                                                                                                                                                                                                                                                                                                                                                                                                                                                                                                                                                                                                                                                                                                                                                                                                                                                                                                                                                                                                                                                                                                                                                                                                                                                                                                                                                                                                                                                                                                                                                                                                                                                                                                                                                                                                                                                                                                                                                                                                                                                                                                                                                                                                                                                                                                                                                                                                                                                                                                                                                                                                                                                                                                                                                                                                                                                                                                                                                                                                                                                                                                                                                                                                                                                                                                                                                                                                                                                                                                                                                                                                                                                                                                                                                                                                                                                                                                                                                                                                                                                                                                                                                                                                                                                                                                                                                                                                                                                                                                                                                                                                                                                                                                                                                                                                                                                                                                                                                                                                                                                                                                                                                                                                                                                                                                                                                                                                                                                                                                                                                                                                                                                                                                                                                                                                                                                                                                                                                                                                                                                                                                                                                                                                                                                                                                                                                                                                                                                                                                                                                                                                                                                                                                                                                                                                                                                                                                                                                                                                                                                                                                                                                                                                                                                                                                                                                                                                                                                                                                                                                                                                                                                                                                                                                                                                                                                                                                                                                                                                                                                                                                                                                                                                                                                                                                                                                                                                                                                                                                                                                                                                                                                                                                                                                                                                                                                                                                                                                                                                                                                                                                                                                                                                                                                                                                                                                                                                                                                                                                                                                                                                                                                                                                                                                                                                                                                                                                                                                                                                                                                                                                                                                                                                                                                                                                                                                                                                                                                                                                                                                                                                                                                                                                                                                                                                                                                                                                                                                                                                                                                                                                                                                                                                                                                                                                                                                                                                                                                                                                                                                                                                                                                                                                                                                                                                                                                                                                                                                                                                                                                                                                                                                                                                                                                                                                                                                                                                                                                                                                                                                                                                                                                                                                                                                                                                                                                                                                                                                                                                                                                                                                                                                                                                                                                                                                                                                                                                                                                                                                                                                                                                                                                                                                                                                                                                                                                                                                                                                                                                                                                                                                                                                                                                                                                                                                                                                                                                                                                                                                                                                                                                                                                                                                                                                                                                                                                                                                                                                                                                                                                                                                                                                                                                                                                                                                                                                                                                                                                                                                                                                                                                                                                                                                                                                                                                                                                                                                                                                                                                                                                                                                                                                                                                                                                                                                                                                                                                                                                          \newcommand{\tonnetz}{{\operatorname{tnz}}}                                                                                                                                                                                                                                                                                                                                                                                                                                                                                                                                                                                                                                                                                                                                                                                                                                                                                                                                                                                                                                                                                                                                                                                                                                                                                                                                                                                                                                                                                                                                                                                                                                                                                                                                                                                                                                                                                                                                                                                                                                                                                                                                                                                                                                                                                                                                                                                                                                                                                                                                                                                                                                                                                                                                                                                                                                                                                                                                                                                                                                                                                                                                                                                                                                                                                                                                                                                                                                                                                                                                                                                                                                                                                                                                                                                                                                                                                                                                                                                                                                                                                                                                                                                                                                                                                                                                                                                                                                                                                                                                                                                                                                                                                                                                                                                                                                                                                                                                                                                                                                                                                                                                                                                                                                                                                                                                                                                                                                                                                                                                                                                                                                                                                                                                                                                                                                                                                                                                                                                                                                                                                                                                                                                                                                                                                                                                                                                                                                                                                                                                                                                                                                                                                                                                                                                                                                                                                                                                                                                                                                                                                                                                                                                                                                                                                                                                                                                                                                                                                                                                                                                                                                                                                                                                                                                                                                                                                                                                                                                                                                                                                                                                                                                                                                                                                                                                                                                                                                                                                     
\newcommand{\Multisets}{{\operatorname{Multisets}}}
                                                                                                                                                                                                                                                                                                                                                                                                                                                                                                                                                                                                                                                                                                                                                                                                                                                                                                                                                                                                                                                                                                                                                                                                                                                                                                                                                                                                                                                                                                                                                                                                                                                                                                                                                                                                                                                                                                                                                                                                                                                                                                                                                                                                                                                                                                                                                                                                                                                                                                                                                                                                                                                                                                                                                                                                                                                                                                                                                                                                                                                                                                                                                                                                                                                                                                                                                                                                                                                                                                                                                                                                                                                                                                                                                                                                                                                                                                                                                                                                                                                                                                                                                                                                                                                              \newcommand{\Etonnetz}{{  {\operatorname{tnz}}}}
\tikzset{labl/.style={anchor=south, rotate=90, inner sep=.5mm}}
\title{Generalisations of Euler's Tonnetz on triangulated surfaces}
\author{Konstanze Rietsch}
\address{Department of Mathematics,
            King's College London,
            Strand, London
            WC2R 2LS
            UK
}
\email{konstanze.rietsch@kcl.ac.uk}%
\thanks{The author was supported by EPSRC grant EP/V002546/1}
\begin{document}

\begin{abstract}
We give a definition of a what we call a `tonnetz' on a triangulated surface, generalising the famous  tonnetz of Euler \cite{EulerTantamen}. In Euler's tonnetz the vertices of a regular `$A_2$ triangulation' of the plane are labelled with notes, or pitch-classes. In our generalisation we allow much more general labellings of triangulated surfaces. In particular, edge labellings turn out to lead to a rich set of examples. We construct natural examples that are related to crystallographic reflection groups and live on triangulations of tori. Underlying these we observe a curious relationship between mathematical Langlands duality and major/minor duality. We also construct `exotic' type-$A_2$ examples (different from Euler's Tonnetz), and a tonnetz on a sphere that encodes all major ninth chords. 
\end{abstract}

\maketitle
\section{Introduction}

Consider the regular triangulation of the plane by equilateral triangles. The famous tonnetz originating in the work of Euler \cite{EulerTantamen} can be thought of as placing a note, or pitch class, at every vertex of this triangulation, such that along the three directions notes go up in fifths, down in minor thirds, and down in major thirds, respectively, see Figure~\ref{f:EulerIntro}. This creates a beautiful pattern in which the major and minor triads appear alternatingly as triples of vertices of triangles. Much major work has been done studying the tonnetz in music theory, with references including \cite{Riemann, Oettingen, Lewin87, Cohn,Cohn98,TymoczkoBook1, Tymoczko12, CannasThesis}. We take a mathematical point of view as an approach to generalising the classical tonnetz, in particular using ideas from Lie Theory. For relevant mathematical background we refer to \cite{HillerBook,HumphreysBook,KirillovBook}. 

Let us consider the tonnetz with the lines in the triangulation viewed as reflection hyperplanes, and let us pick a triangle that we refer to as the `fundamental alcove'.  For Euler's tonnetz we may pick the $C-E-G$ triangle in the center of Figure~\ref{f:EulerIntro}, and consider $C$ as our origin. Then the group containing all the reflections along all the hyperplanes in this configuration is generated by reflections $s_0,s_1,s_2$ along the three lines bounding this fundamental alcove. The reflection called $s_0$ is along the line through the affine hyperplane with $E$ and $G$,
 $s_1$  is through the $x$-axis, and reflection $s_2$ is  through the line containing $C$ and $E$. We consider the reflections $s_1$ and $s_2$ to generate what is called the finite Weyl group of type $A_2$ (which is associated to the Lie algebra $\mathfrak {sl}_3$ and is just the symmetric group $S_3$, or symmetry group of an equilateral triangle). Adding in $s_0$ gives the simplest interesting example of an infinite Coxeter group, which is the so-called `affine Weyl group of type $A_2$'.  This point of view has already been part of group-theoretic investigations of harmonic movement, as in \cite{Schmidmeier}. Here, we will not so much go in the direction of group theory, or harmonic progressions. Rather, we will see how to generalise the concept of a tonnetz and the idea of geometric patterns of notes illuminating the structure of chords, with this point of view in mind.

We first generalise the notion of a tonnetz on a triangulated surface by allowing \textit{sets} of notes to be associated to vertices, edges and faces, with some compatibility conditions, as opposed to associating just single notes to vertices. Euler's tonnetz for example, associates a single note to every vertex, a pair to every edge, and a triad to every face. However, in our more general framework a tonnetz may also associate a single note to every edge, a triad to every triangle and a varying number of notes to every vertex, among other possibilities. The formal definition is given in Section~\ref{s:tonnetzdef}.

\begin{figure}
\centering
\begin{tikzpicture}[
  xscale=1.5,yscale=2.5981,
  note/.style={
  circle,  minimum size=0.5cm,fill=white},
  every node/.append style={font=\footnotesize}
  ]

\clip (0.2,0.25) rectangle (6.7,2.3);

\draw[pattern=north west lines, pattern color=lightgray] (3.5,1) -- (4,1.5) -- (4.5,1) -- cycle;



\draw (6.5,2)--(6.75,2.25);
\draw (6.5,2)--(6.25,2.25);
\draw (5.5,2)--(5.75,2.25);
\draw (5.5,2)--(5.25,2.25);
\draw (4.5,2)--(4.75,2.25);
\draw (4.5,2)--(4.25,2.25);
\draw (3.5,2)--(3.75,2.25);
\draw (3.5,2)--(3.25,2.25);
\draw (2.5,2)--(2.75,2.25);
\draw (2.5,2)--(2.25,2.25);
\draw (1.5,2)--(1.75,2.25);
\draw (1.5,2)--(1.25,2.25);
\draw (0.5,2)--(0.75,2.25);
\draw (0.5,2)--(0.25,2.25);

\draw (6.5,0)--(6.75,-0.25);
\draw (6.5,0)--(6.25,-0.25);
\draw (5.5,0)--(5.75,-0.25);
\draw (5.5,0)--(5.25,-0.25);
\draw (4.5,0)--(4.75,-0.25);
\draw (4.5,0)--(4.25,-0.25);
\draw (3.5,0)--(3.75,-0.25);
\draw (3.5,0)--(3.25,-0.25);
\draw (2.5,0)--(2.75,-0.25);
\draw (2.5,0)--(2.25,-0.25);
\draw (1.5,0)--(1.75,-0.25);
\draw (1.5,0)--(1.25,-0.25);
\draw (0.5,0)--(0.75,-0.25);
\draw (0.5,0)--(0.25,-0.25);

\begin{scope}
\newcommand*\columns{7}
\newcommand*\rows{2}
\clip(0,-\pgflinewidth) rectangle (\columns,\rows);
\foreach \y in {0,0.5,1,...,\rows} 
  \draw (0,\y) -- (\columns,\y);
\foreach \z in {-1.5,-0.5,...,\columns} 
{
    \draw (\z,\rows) -- (\z+2,0);
    \draw (\z,0) -- (\z+2,\rows);
}
\end{scope}

\foreach [count=\row from 0] \notelist in {
{E$\flat\flat$,B$\flat\flat$,F$\flat$,C$\flat$,G$\flat$,D$\flat$,A$\flat$},
  {C$\flat$,G$\flat$,D$\flat$,A$\flat$,E$\flat$,B$\flat$,F,C},
  {E$\flat$,B$\flat$,F,C,G,D,A},
  {C,G,D,A,E,B,F$\sharp$,C$\sharp$},
  {E,B,F$\sharp$,C$\sharp$,G$\sharp$,D$\sharp$,A$\sharp$}}
  \foreach \note [count=\column from 0,evaluate={\colX=\column+0.5-mod(\row,2)/2;}] in \notelist
     \node [note] at (\colX,\row*0.5) {\strut \note};
\end{tikzpicture}
\caption{Euler's tonnetz on an $A_2$ tiling of the plane, with fundamental alcove highlighted.}\label{f:EulerIntro}
\end{figure}
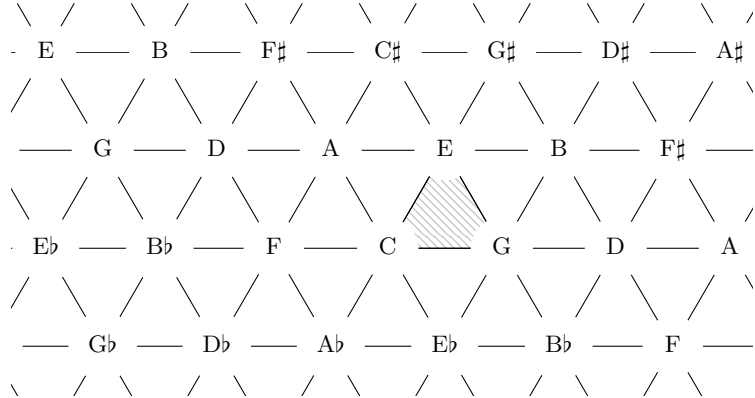
 
Next, we consider natural triangulations of the plane generalising the `type $A_2$ tiling' by equilateral triangles. These form the basis of some of our new tonnetz constructions. The words `type $A_2$' refer to the classification of simple Lie algebras into the classical series $A_n$, $B_n$, $C_n$, $D_n$ and exceptional types $E_6,E_7,E_8, F_4$ and $G_2$. These are encoded famously in their \textit{Dynkin diagrams}, the rank $2$ examples of which are
\begin{center} 
$A_2$\quad \dynkin[edge length=.8cm,
root radius=.09cm=]{A}{2}\qquad \qquad
$B_2$\quad \dynkin[edge length=.8cm,
root radius=.09cm]{B}{2} \qquad \qquad
$C_2$\quad \dynkin[edge length=.8cm,
root radius=.09cm]{C}{2}\qquad \qquad
$G_2$\quad \dynkin[edge length=.8cm,
root radius=.09cm]{G}{2}.
\end{center}
Considering the additional types $B_2,C_2,G_2$ now gives us further analogous hyperplane arrangements in the plane to utilize. We will refer to the tilings obtained from these hyperplane arrangements as $B_2$, $C_2$ and $G_2$ tilings. Here $B_2$ and $C_2$ represent the classical Lie algebras $\mathfrak{so}_5$ and $\mathfrak {sp}_4$, which are isomorphic, but we consider them as \textit{Langlands dual} (which they also are). Similarly, the Lie algebra $\mathfrak g_2$ is Langlands dual to itself, so that we secretly have another version of $G_2$ where the arrow in the Dynkin diagram points in the other direction.  

The vertices of the Dynkin diagrams may be considered as labelled by the simple reflections generating the finite Weyl group, such as
\begin{center} 
\dynkin[text style/.style={scale=1.1},edge length=1.2cm,
root radius=.09cm, labels={s_1,s_2.}, ]{A}{2}
\end{center} 
We describe these reflections in terms of the tonnetz by locating the associated side of the fundamental alcove and reading off its label. In this way we may label the Dynkin diagram instead with pitch classes. So, for Euler's tonnetz as in Figure~\ref{f:EulerIntro} with our choice of fundamental alcove we have a Dynkin diagram labelling
\begin{center} 
\dynkin[edge length=1.2cm,
root radius=.09cm, labels={C/G,C/E}]{A}{2}.
\end{center} 
Indeed, the Dynkin diagram can be read off from the fundamental alcove in rank $2$ as follows. The single, double or triple bond in the Dynkin diagram encodes the angle between the sides they label, which is $60^\circ$ in type $A_2$, $45^\circ$ in types $B_2/C_2$ and $30^\circ$ in type $G_2$. In the latter two cases the fundamental alcove is not equilateral, and the arrow in the Dynkin diagram points towards the longer side\footnote {This is the hyperplane perpendicular to the short root from the more general point of view of root systems.}. Swapping the direction of this arrow, and thus swapping which side is long and which is short, is part of a deep duality in mathematics called `Langlands duality'. 

After giving general axioms for what we call a \textit{simplicial surface tonnetz} in Section~\ref{s:tonnetzdef}, we start by focusing on crystallographic examples. In Section~\ref{s:BC} we construct an \textit{edge-tonnetz} (where every edge is labelled by a single pitch class) based on a $B_2$ tiling of the plane, and another `Langlands dual' one based on a $C_2$ tiling. In terms of Dynkin diagrams we consider these to give us labellings
\begin{center}
\dynkin[edge length=.8cm,
root radius=.09cm,labels={A,F}]{B}{2},\qquad
\quad \dynkin[edge length=.8cm,
root radius=.09cm,labels={A,F}]{C}{2}. 
\end{center}
In Section~\ref{s:G2} we construct an edge-tonnetz on a $G_2$-tiling and also its dual. These correspond to 
\begin{center}
\dynkin[edge length=.8cm,
root radius=.09cm,labels={A,D}]{G}{2} \qquad 
\quad \dynkin[edge length=.8cm,
root radius=.09cm,labels={D,A}]{G}{2}.
\end{center}
In all of these examples we see each triangle representing a major or minor triad, and the dualisation swaps the two types of triad. Each one of these examples also descends to give a finite tonnetz on a torus. 

In Section~\ref{s:tritone} we construct two versions of a tonnetz on the $A_2$ tiling. These have a pair of notes forming a tritone associated to each edge, and they realise the major triads based on a whole tone scale. The difference  between them relates to the allocation of pitch classes to the vertices, and has great impact on the symmetry group. Both of them descend to give a finite tonnetz on a torus but with very different fundamental domains.

In Section~\ref{s:sphere} we construct an edge tonnetz on a sphere. This tonnetz is made up of $24$ triangles paired up into $12$ quadrilaterals. Each quadrilateral has $5$ edges in total, having $4$ edges on the outside and one internal diagonal. Our tonnetz assigns a pitch class to each edge in a way as to realise precisely all $12$ major ninth chords, one in each quadrilateral.    
 
Finally, we remark that most of the constructions we make give edge tonnetz examples. With an edge tonnetz, unlike with Euler's vertex tonnetz, it is the case that adjacent chords can only be expected to have a single note in common. Our generalisation of the concept of a tonnetz in this paper is therefore not aimed at being specifically a tool for voice leading, for example. The most extreme case would be the final tonnetz on a sphere where, in particular, the $A$ and $B\flat$ major ninth chords are adjacent via a shared edge labelled $A$, their only in-common note. If we drop the `edge' constraint, adjacent chords may have no notes in common at all, see Section~\ref{s:tritone}. 

The constructions we make here serve to draw out and elucidate structures and patterns that exist and are inherently part of the framework of equal tempered music. Awareness of these relationships could hopefully also be a compositional tool.  
 \vskip .2cm 
 \noindent{\it Acknowledgements:} The author thanks Moreno Andreatta and Alexandre Popoff for stimulating conversations and encouragement early on. Thanks go to Pavel Tumarkin for a mathematical talk and Sara Dhillon for her inspiring jazz harmony lessons, which together led to the jazz bauble of major ninth chords. Deep thanks also go to Agnes Kory (and the BBCM).
 
\section{Axioms for a generalised tonnetz on a triangulated surface}\label{s:tonnetzdef}

Let $\mathcal N$ be the set of residue classes modulo $12$, so  $\mathcal N=\Z / 12 \Z$. We identify $\mathcal N$ with the pitch classes in the equal tempered chromatic scale, i.e. where notes related by an octave are considered equivalent, starting with $0\leftrightarrow A$. 
Let $\Multisets(\mathcal N)$ be the set of multisets in $\mathcal N$, where a multiset is a set where elements are allowed to have (finite) multiplicities. The order of a multiset $\mathcal C$ is denoted by $|\mathcal C|$, and the underlying set (without multiplicities) will be denoted by $[\mathcal C]$.  For example $\mathcal C=\{1,3,3\}$ is a multiset with $|\mathcal C|=3$ and $[\mathcal C]=\{1,3\}$. 
 Suppose $\mathcal A$ is a set and $\mathcal C$ is a multiset. By a bijection $\phi:\mathcal A\to\mathcal C$ we mean a map   $\phi:\mathcal A\to[\mathcal C]$ for which   the cardinality of any fiber $\phi\inv(c)$ recovers the multiplicity of $c$ in $\mathcal C$. For example, $\phi(1)= 1$, $\phi(2)=3$, $\phi(3)= 3$ defines a bijection  from $\mathcal A=\{1,2,3\}$ to $\mathcal C=\{1,3,3\}$.

Let $S$ be a surface (a $2$-dimensional toplogical manifold) and consider a triangulation of $S$ with sets of vertices, edges, and faces (also called $0$-simplices, $1$-simplices, and $2$-simplices) denoted $\mathcal V_0,\mathcal V_1$, and $\mathcal V_2$, respectively. For a more formal introduction to surfaces and triangulations we refer to \cite{GallierXu}. The set $\mathcal V=\bigcup_{i=0}^2 \mathcal V_i$ of all simplices in the triangulation comes with a partial order $\le$ given by inclusion. We write $\tau\prec \sigma$ for simplices $\tau$ and $\sigma$ if $\sigma$ covers $\tau$, that is, if $\tau $ has codimension $1$ in $\sigma$.

\begin{defn} \label{d:tonnetz}
We define a \textit{simplicial surface tonnetz} to be a triangulated surface $S$ together with a map from the set $\mathcal V=\mathcal V_0\cup\mathcal V_1\cup\mathcal V_2$ of simplices,
\[
\Etonnetz:\mathcal V\to \Multisets(\mathcal N),
\]
with the following properties.
\begin{enumerate}
\item (downwards coherent) 
Let  $\sigma\in \mathcal V_1\cup \mathcal V_2$. We have a bijection $\partial_\sigma:\{\tau\mid \tau\prec\sigma\}\to\Etonnetz(\sigma)$ such that $\partial_\sigma(\tau)=N$ implies $N\in \Etonnetz(\tau)$. 
\item (upwards coherent) Let $\rho\in \mathcal V_0\cup \mathcal V_1$. We have a bijection $\Delta_\rho:\{\tau\mid \tau\succ\rho\}\to \Etonnetz(\rho)$ such that $\Delta_\rho(\tau)=N$ implies  $N\in\Etonnetz (\tau)$.   
\end{enumerate} 
We note that any $1$-simplex has two vertices in its boundary, and is in the boundary of precisely two $2$-simplices. Therefore either form of coherence implies that $|\Etonnetz(\tau)|= 2$ for any $1$-simplex $\tau$.  
Downwards coherence also implies that $|\Etonnetz(\sigma)|= 3$ for any $2$-simplex $\sigma$. Upwards coherence implies that $|\Etonnetz(\rho)|$ recovers the valency for any vertex $\rho$. 
\end{defn}

We will write simply `tonnetz' instead of `simplicial surface tonnetz'. Note that the system of bijections is not part of the data of the tonnetz.
 
\begin{example}
For every $N\in\mathcal N$ there is a \textit{constant} tonnetz on $S$ for which $\tonnetz(\sigma)=\{N,N,N\}$ whenever $\sigma\in\mathcal V_2$.
\end{example}

We note that Definition~\ref{d:tonnetz} is very general, and there are additional properties that we will want to impose on a tonnetz in order to obtain interesting examples. For example, the following lemma shows that any set of triads strung together can arise in such a tonnetz. 
\begin{lemma}\label{l:toogen} Any map 
$\tonnetz_2:\mathcal V_2\to \Multisets(\mathcal N)$ with $|\tonnetz_2(\sigma)|=3$ extends to a tonnetz.\end{lemma}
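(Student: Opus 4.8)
The plan is to build the labelling downwards through the poset of simplices, exploiting the fact (emphasised right after Definition~\ref{d:tonnetz}) that the systems of bijections $\partial$ and $\Delta$ are \emph{not} part of the data of a tonnetz and may therefore be chosen freely. Starting from the given labels $\tonnetz_2$ on the $2$-simplices, I would first define $\tonnetz$ on the $1$-simplices, then on the $0$-simplices, in each case declaring the label of a lower simplex to be the multiset of notes that the simplices immediately above it ``hand down'' through an arbitrary choice of distributing bijection. The key observation is that defining a label in this way makes the relevant downward coherence (for the simplices above) and the relevant upward coherence (for the simplex itself) hold simultaneously and automatically.

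For the edge step: for each face $\sigma\in\mathcal V_2$ choose an arbitrary bijection $\partial_\sigma\colon\{\tau\mid\tau\prec\sigma\}\to\tonnetz_2(\sigma)$ from its three edges to its triad, which exists since both sides have cardinality $3$. Since every $1$-simplex $\tau$ lies in the boundary of exactly two $2$-simplices, say $\sigma_1$ and $\sigma_2$, I set $\tonnetz(\tau):=\{\partial_{\sigma_1}(\tau),\partial_{\sigma_2}(\tau)\}$, a multiset of order $2$. Downward coherence at each face then holds by construction, because $\partial_{\sigma_i}(\tau)\in\tonnetz(\tau)$ by definition. Upward coherence at each edge holds by taking $\Delta_\tau(\sigma_i):=\partial_{\sigma_i}(\tau)$: this is a bijection onto $\tonnetz(\tau)$ even when the two handed-down notes coincide (then the value has multiplicity $2$ and its fibre is $\{\sigma_1,\sigma_2\}$), and $\Delta_\tau(\sigma_i)\in\tonnetz(\sigma_i)$.

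For the vertex step: for each edge $\tau$ choose an arbitrary bijection $\partial_\tau\colon\{\rho\mid\rho\prec\tau\}\to\tonnetz(\tau)$ from its two endpoints to its pair of notes. For each vertex $\rho$ I then set $\tonnetz(\rho):=\{\partial_\tau(\rho)\mid\tau\succ\rho\}$, the multiset of all notes handed down to $\rho$ by the edges containing it; here I would invoke local finiteness of the triangulation so that this is a finite multiset, of order equal to the valency of $\rho$. Exactly as before, downward coherence at each edge is immediate from the construction, and upward coherence at each vertex follows from $\Delta_\rho(\tau):=\partial_\tau(\rho)$, whose fibre cardinalities recover the multiplicities of $\tonnetz(\rho)$ by the very definition of that multiset.

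I do not expect a genuine obstacle: the statement is in essence a bookkeeping assertion, and the construction works precisely because a tonnetz records only the labels and not the bijections realising coherence. The one point demanding care is the interplay of the two notions of coherence on a $1$-simplex, which carries both a downward bijection (to its two vertices) and an upward bijection (to its two faces); since these are independent maps on the same two-element multiset $\tonnetz(\tau)$, no conflict arises. The cardinality assertions $|\tonnetz(\tau)|=2$ and $|\tonnetz(\rho)|=\mathrm{val}(\rho)$ then emerge as automatic consequences of the construction rather than as constraints needing separate verification.
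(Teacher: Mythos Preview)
Your proposal is correct and follows essentially the same construction as the paper's proof: choose bijections $\partial_\sigma$ on faces, use them to define $\tonnetz(\tau)$ on edges, then choose bijections $\partial_\tau$ on edges and use those to define $\tonnetz(\rho)$ on vertices, with $\Delta$ given by $\Delta_\tau(\sigma_i)=\partial_{\sigma_i}(\tau)$ and $\Delta_\rho(\tau)=\partial_\tau(\rho)$. Your added remarks on local finiteness and on the multiplicity-$2$ case when the two handed-down notes coincide are welcome clarifications but do not change the argument.
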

 
\begin{proof} We can construct a tonnetz map $\tonnetz:\mathcal V\to\Multisets(\mathcal N)$  with $\tonnetz|_{\mathcal V_2}=\tonnetz_2$ as follows.
For every face $\sigma\in\mathcal V_2$ we define $\tonnetz(\sigma)=\tonnetz_2(\sigma)$, and we choose a bijection $\partial_\sigma:\{\tau\mid \tau\prec \sigma\}\to\tonnetz(\sigma)$. This bijection exists since both sides have cardinality $3$.

For $\tau\in\mathcal V_1$ with $\{\sigma\mid \sigma\succ \tau\}=\{\sigma_1,\sigma_2\}$ we define $\tonnetz(\tau)=\{\partial_{\sigma_1}(\tau),\partial_{\sigma_2}(\tau)\}$ and we  choose a bijection $\partial_\tau:\{\rho\mid \rho\prec \tau\}\to\tonnetz(\tau)$, which exists since both sides have cardinality $2$.

For a vertex $\rho\in\mathcal V_0$ with edges $\{\tau\mid \tau\succ \rho\}=\{\tau_1,\dots,\tau_k\}$ we finally define 
\[\tonnetz(\rho)=\{\partial_{\tau_1}(\rho),\dots,\partial_{\tau_k}(\rho)\}.
\]
The map $\tonnetz$  is now defined on all of $\mathcal V=\mathcal V_0\cup \mathcal V_1\cup \mathcal V_2$. The bijection $\Delta_\rho:\{\tau\mid \tau\succ \rho\}\to\tonnetz(\rho)$ can be defined by $\Delta_\rho(\tau_i):=\partial_{\tau_i}(\rho)$, and  $\Delta_\tau:\{\sigma\mid \sigma\succ \tau\}\to\tonnetz(\tau)$ by $\Delta_\tau(\sigma_i):=\partial_{\sigma_i}(\tau)$.
\end{proof}


\begin{defn}[Vertex and edge tonnetzes] Let $\tonnetz:\mathcal V\to\Multisets(\mathcal N)$ be a tonnetz on a surface $S$. If for all vertices $\rho\in\mathcal V_0$ it holds that the set $[\tonnetz(\rho)]$ underlying $\tonnetz(\rho)$ has cardinality one, then we call this tonnetz a \textit{vertex tonnetz}.  If for all edges $\tau\in\mathcal V_1$ it holds that the set $[\tonnetz(\tau)]$ underlying $\tonnetz(\tau)$ has cardinality one, then we call this tonnetz an \textit{edge tonnetz}. 

Note that an edge tonnetz is one where the dyads associated to the edges are all just two notes in unison. When describing an edge tonnetz we will generally suppress the multiset aspect and say a note $N$ is associated to a given edge, when we mean the multiset $\{N,N\}$ is associated to the edge. Similarly for a vertex tonnetz. 
If $\mathcal D$ is a set of dyad types, then we call a tonnetz a \textit{${\mathcal D}$-edge-tonnetz} whenever the dyads associated to the edges $\tau$ are all of the type described by $\mathcal D$. 
\end{defn}

\begin{defn}
We call a tonnetz \textit{major} if for every $2$-simplex $\sigma$ the set $\tonnetz(\sigma)$ represents a major triad, that is, is  of the form $\{N, N+4, N+7\}$ modulo $12$. Similarly, we call it \textit{minor, diminished} or \textit{augmented} if $\tonnetz(\sigma)$ is always a minor, diminished or augmented triad, respectively.
We call it a \textit{major/minor} tonnetz if $\tonnetz(\sigma)$ is either major or minor for all $2$-simplices $\sigma$. Finally, we call a major/minor tonnetz \textit{complete} if it contains every major and minor triad.
\end{defn}

\begin{lemma}\label{l:vertexedge} Let $\mathcal V=\mathcal V_0\cup\mathcal V_1\cup\mathcal V_2$ be the set of vertices, edges and faces of a triangulated surface~$S$. 
\begin{enumerate}
\item
Given any map $V:\mathcal V_0\to \mathcal N$, there is a unique vertex tonnetz such that for any vertex $\rho\in\mathcal V_0$ we have $[\tonnetz(\rho)]=\{V(\rho)\}$, and for every face $\sigma\in\mathcal V_2$ we have
\begin{equation*}\label{e:sigmadef}
\tonnetz(\sigma)=\{V(\rho)\mid\rho\in\mathcal V_0,\  \rho<\sigma \}.
\end{equation*}
\item 
Given any map $E:\mathcal V_1\to \mathcal N$, there is a unique edge tonnetz on $S$ with the property $[\tonnetz(\tau)]=\{E(\tau)\}$ for all $\tau\in\mathcal V_1$.
\end{enumerate}
\end{lemma}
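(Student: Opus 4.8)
The plan is to prove both parts with the same two-step template: first establish \textbf{existence} by writing down the candidate map $\tonnetz$ on $\mathcal V_0,\mathcal V_1,\mathcal V_2$ together with explicit coherence bijections, and then establish \textbf{uniqueness} of the map $\tonnetz$ (recall the bijections are not part of the data) by showing that the two coherence axioms of Definition~\ref{d:tonnetz} force every value of any tonnetz meeting the stated hypotheses. Throughout I would use the standing fact recorded after Definition~\ref{d:tonnetz}, that each $1$-simplex has exactly two vertices and lies in the boundary of exactly two $2$-simplices; this is what makes upward coherence at an edge (where $|\tonnetz(\tau)|=2$) satisfiable at all.

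I would treat part~(2) first, as it is the cleaner case. Set $\tonnetz(\tau)=\{E(\tau),E(\tau)\}$ for each edge, $\tonnetz(\sigma)=\{E(\tau_1),E(\tau_2),E(\tau_3)\}$ for a face $\sigma$ with edges $\tau_1,\tau_2,\tau_3$, and $\tonnetz(\rho)=\{E(\tau)\mid \tau\succ\rho\}$ for a vertex $\rho$. The coherence bijections are then essentially tautological: $\partial_\sigma(\tau_i)=E(\tau_i)$, $\partial_\tau(\rho)=E(\tau)$ for both endpoints, $\Delta_\rho(\tau)=E(\tau)$, and $\Delta_\tau(\sigma)=E(\tau)$ for both incident faces. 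In each case the chosen value lies in the required multiset precisely because of the incidence $\tau\prec\sigma$ or $\rho\prec\tau$, and the fibre cardinalities match the multiplicities by inspection. For uniqueness, downward coherence at a face forces $\partial_\sigma(\tau_i)\in[\tonnetz(\tau_i)]=\{E(\tau_i)\}$, hence $\tonnetz(\sigma)=\{E(\tau_1),E(\tau_2),E(\tau_3)\}$, and upward coherence at a vertex forces $\Delta_\rho(\tau)=E(\tau)$, hence $\tonnetz(\rho)=\{E(\tau)\mid\tau\succ\rho\}$; so every value is determined by $E$.

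For part~(1) I would define $\tonnetz(\rho)$ to be $V(\rho)$ taken with multiplicity equal to the valency of $\rho$, set $\tonnetz(\tau)=\{V(\rho_1),V(\rho_2)\}$ for an edge with endpoints $\rho_1,\rho_2$, and define $\tonnetz(\sigma)$ by the displayed formula. Upward coherence at vertices and edges, and downward coherence at edges, are immediate as in part~(2). The one verification with any content is downward coherence at a face $\sigma$ with vertices $\rho_1,\rho_2,\rho_3$: one needs a bijection from the three edges $\tau_{12},\tau_{23},\tau_{13}$ onto the multiset $\{V(\rho_1),V(\rho_2),V(\rho_3)\}$ with $\partial_\sigma(\tau_{ij})\in\{V(\rho_i),V(\rho_j)\}$. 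The cyclic assignment $\tau_{12}\mapsto V(\rho_1)$, $\tau_{23}\mapsto V(\rho_2)$, $\tau_{13}\mapsto V(\rho_3)$ is a system of distinct representatives that does this, and its image is the required multiset even when some of the $V(\rho_i)$ coincide, the fibre cardinalities again matching the multiplicities. Uniqueness is then immediate: edges are forced by downward coherence exactly as in part~(2), vertices are forced by $[\tonnetz(\rho)]=\{V(\rho)\}$ together with the valency count, and faces are forced by the stipulated formula.

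I expect the one genuinely subtle point to be conceptual rather than computational, and it sits in part~(1): the face formula must be \emph{assumed}, not derived. Downward coherence at a face only demands that \emph{some} matching of the three edges into their endpoint-values exist, and such a matching can repeat a vertex value (for instance produce $\{V(\rho_1),V(\rho_1),V(\rho_2)\}$), so the vertex labelling $V$ together with the tonnetz axioms does not on its own single out the face multisets. This is exactly why the statement records $\tonnetz(\sigma)=\{V(\rho)\mid \rho<\sigma\}$ as part of the characterisation; once it is imposed, the uniqueness argument closes with no further choices. By contrast, in part~(2) no analogous ambiguity arises, because each edge already carries the single forced value $E(\tau)$.
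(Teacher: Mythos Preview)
Your proof is correct and follows essentially the same approach as the paper's: define $\tonnetz$ on each stratum by the forced formula, verify the coherence bijections, and note that the axioms leave no freedom. You are simply more explicit where the paper says ``straightforward'' --- in particular your cyclic assignment $\tau_{ij}\mapsto V(\rho_i)$ for downward coherence at a face, and your closing remark on why the face formula in part~(1) must be stipulated rather than derived, are useful details that the paper omits but does not contradict.
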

\begin{proof}
(1) Let $\tau\in\mathcal V_1$ be an edge with vertices $\rho_1,\rho_2$. Downwards coherence and $[\tonnetz(\rho)]=\{V(\rho)\}$ completely determine the value of $\tonnetz(\tau)$. Namely, we must set
$\tonnetz(\tau)=\{V(\rho_1),V(\rho_2)\}$. Upwards coherence implies that to a vertex $\rho$ we must associate the multiset $\tonnetz(\rho)=\{V(\rho),\dotsc, V(\rho)\}$ with multiplicity of $V(\rho)$ given by the valency of $\rho$. For a face $\sigma$ the value of $\Etonnetz(\sigma)$ is already given.
It is straightforward to choose bijections that verify that altogether this defines a tonnetz. 

(2) To construct the edge tonnetz from the map $E$, first associate to every face $\sigma$ containing edges $\tau_1,\tau_2,\tau_3$ the multiset $\{E(\tau_1),E(\tau_2),E(\tau_3)\}$. For a vertex $\rho$ which is contained in edges $\tau^1,\dotsc,\tau^k$ we set $\tonnetz(\rho)=\{E(\tau^1),E(\tau^2),\dotsc,E(\tau^k)\}$.   Again the coherence conditions are straightforward, and they force the uniqueness of this tonnetz. 
\end{proof}

\begin{example} Lemma~\ref{l:vertexedge}(1) gives us a standard way to construct a tonnetz from any vertex labelling on a triangulated surface. 
Euler's tonnetz \cite{EulerTantamen} can be interpreted as an example of this. It is a complete major/minor vertex tonnetz on an $A_2$-tiling of the plane in our terminology. Or, we may take the quotient of the tonnetz using its natural $\Z^2$ translational symmetry (for which a choice of fundamental domain is indicated in Figure~\ref{f:Euler}), so that we obtain a complete major/minor vertex tonnetz on a torus. Other vertex tonnetzes, variations on Euler's tonnetz where the intervals have been adjusted, were constructed in~\cite{Cohn}.  
With our very general definition, even keeping the same underlying tiling, we can change the tonnetz much more radically. One way to do this is demonstrated in Section~\ref{s:tritone}.
\end{example}

\begin{remark}
Vertex tonnetzes as in the above example are the more common approach to the organisation of musical structure via geometry. Interesting surfaces other than tori have appeared in this context in the literature. For example, the idea of a vertex tonnetz on a Klein bottle was introduced in \cite{Peck}. Replacing triangles in our definition with more general polygons gives natural further generalisation of our Definition~\ref{d:tonnetz}. In the case of pentagons, vertex tonnetzes of this more general type were constructed on the hyperbolic plane (and thereby on higher genus surfaces) in \cite{Piovan}. A generalisation of Euler's tonnetz similiar to Cohn \cite{Cohn} but from $12$ to $N$-tone equal tempered scales, with classification of surfaces arising, was undertaken in \cite{Catanzaro-tonnetze}.
\end{remark}

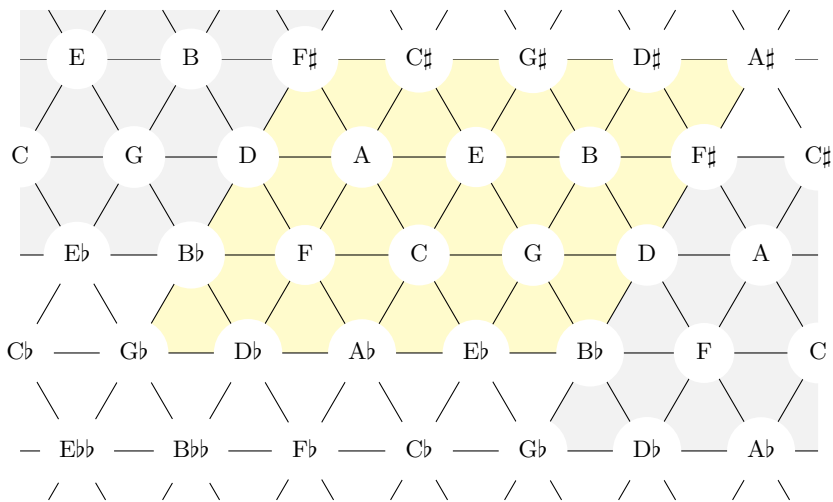
\begin{figure}
\centering
\begin{tikzpicture}[
  xscale=1.5,yscale=2.5981,
  note/.style={
  circle,  minimum size=0.5cm,fill=white},
  every node/.append style={font=\footnotesize}
  ]

\fill[yellow!25] (1,0.5) -- (2.5,2) -- (6.5,2) -- (5,0.5) -- cycle;

\fill[lightgray!20] (4.5,0) -- (6,1.5) -- (7,1.5) -- (7,0) -- cycle;

\fill[lightgray!20] (1.5,1) -- (2.75,2.25) -- (0,2.25) -- (0,1) -- cycle;

\draw (6.5,2)--(6.75,2.25);
\draw (6.5,2)--(6.25,2.25);
\draw (5.5,2)--(5.75,2.25);
\draw (5.5,2)--(5.25,2.25);
\draw (4.5,2)--(4.75,2.25);
\draw (4.5,2)--(4.25,2.25);
\draw (3.5,2)--(3.75,2.25);
\draw (3.5,2)--(3.25,2.25);
\draw (2.5,2)--(2.75,2.25);
\draw (2.5,2)--(2.25,2.25);
\draw (1.5,2)--(1.75,2.25);
\draw (1.5,2)--(1.25,2.25);
\draw (0.5,2)--(0.75,2.25);
\draw (0.5,2)--(0.25,2.25);

\draw (6.5,0)--(6.75,-0.25);
\draw (6.5,0)--(6.25,-0.25);
\draw (5.5,0)--(5.75,-0.25);
\draw (5.5,0)--(5.25,-0.25);
\draw (4.5,0)--(4.75,-0.25);
\draw (4.5,0)--(4.25,-0.25);
\draw (3.5,0)--(3.75,-0.25);
\draw (3.5,0)--(3.25,-0.25);
\draw (2.5,0)--(2.75,-0.25);
\draw (2.5,0)--(2.25,-0.25);
\draw (1.5,0)--(1.75,-0.25);
\draw (1.5,0)--(1.25,-0.25);
\draw (0.5,0)--(0.75,-0.25);
\draw (0.5,0)--(0.25,-0.25);

\begin{scope}
\newcommand*\columns{7}
\newcommand*\rows{2}
\clip(0,-\pgflinewidth) rectangle (\columns,\rows);
\foreach \y in {0,0.5,1,...,\rows} 
  \draw (0,\y) -- (\columns,\y);
\foreach \z in {-1.5,-0.5,...,\columns} 
{
    \draw (\z,\rows) -- (\z+2,0);
    \draw (\z,0) -- (\z+2,\rows);
}
\end{scope}

\foreach [count=\row from 0] \notelist in {
{E$\flat\flat$,B$\flat\flat$,F$\flat$,C$\flat$,G$\flat$,D$\flat$,A$\flat$},
  {C$\flat$,G$\flat$,D$\flat$,A$\flat$,E$\flat$,B$\flat$,F,C},
  {E$\flat$,B$\flat$,F,C,G,D,A},
  {C,G,D,A,E,B,F$\sharp$,C$\sharp$},
  {E,B,F$\sharp$,C$\sharp$,G$\sharp$,D$\sharp$,A$\sharp$}}
  \foreach \note [count=\column from 0,evaluate={\colX=\column+0.5-mod(\row,2)/2;}] in \notelist
     \node [note] at (\colX,\row*0.5) {\strut \note};
\end{tikzpicture}
\caption{Euler's tonnetz is a vertex tonnetz in the plane that descends to a tonnetz on a torus.}\label{f:Euler}
\end{figure}

\section{Edge tonnetzes of type $B_2$ and $C_2$}\label{s:BC}

In this section we describe one tonnetz for a tiling of type $B_2$ and one for $C_2$. The first one is entirely major and the second entirely minor. 

\subsection{}\label{s:B2} The tonnetz based on the  $B_2$-tiling, has four distinct triads, all major. It is given in Figure~\ref{f:B2}, and consists of an infinite tiling of the plane with fundamental domain highlighted in yellow. Gluing together the edges of the fundamental domain we obtain a finite tonnetz on a torus. It has $8$ triangles but only $4$ distinct triads, each one appearing twice.  Namely these triads are the major triads on $F, A\flat, B$ and~$D$.

There are two types of vertices in this tonnetz. One of them has $8$  edges, and correspondingly $8$ notes, attached to it, namely
\[
\begin{array}{cccc}
A, & C,& E\flat, & G\flat,\\
 B,& D &F&A\flat.\\
\end{array}
\]
On the torus there are two such $8$-valent vertices. The other  type of vertex is $4$-valent and has the diminished seventh chord $A,  C, E\flat,  G\flat$  associated to it. There are also two of those.

If we transpose every note up by a whole tone or down by a whole tone we get two alternative versions of this tonnetz, which altogether cover all of the $12$ major triads. If we now label the vertices of the Dynkin diagram by its corresponding edges for the fundamental alcove, then the three major edge tonnetzes described in this section correspond to,

\begin{center}
$B_2$\quad \dynkin[edge length=.8cm,
root radius=.09cm,labels={A,F}]{B}{2} (pictured),\qquad
\quad \dynkin[edge length=.8cm,
root radius=.09cm,labels={B,G}]{B}{2},\qquad 
\quad \dynkin[edge length=.8cm,
root radius=.09cm,labels={G,E\flat}]{B}{2}.
\end{center}

 \begin{figure}   
\centering
\begin{tikzpicture}[baseline= (a).base]
\fill[yellow!25] (-1.95,1.7) -- (1.95,1.7) -- (1.95,-1.65) -- (-1.95,-1.65)-- cycle;
\fill[ pattern=north west lines, pattern color=lightgray] (1.95,1.7) -- (0,0) -- (1.95,0)-- cycle;
\node[scale=.91,xscale=.9] (a) at (0,0){
\begin{tikzcd}
	\bullet && \bullet && \bullet && \bullet && \bullet && \bullet && \bullet \\
	\\
	\bullet && \bullet && \bullet && \bullet && \bullet && \bullet && \bullet \\
	\\
	\bullet && \bullet && \bullet && \bullet && \bullet && \bullet && \bullet
	\arrow["E\flat"{description}, no head, from=1-5, to=1-7]
	\arrow["A"{description}, no head, from=1-7, to=1-9]
	\arrow["D\sharp"{description}, no head, from=5-5, to=5-7]
	\arrow["A"{description}, no head, from=5-7, to=5-9]
	\arrow["F\sharp"{description}, no head, from=5-9, to=3-9]
	\arrow["C"{description}, no head, from=3-9, to=1-9]
	\arrow["A"{description}, no head, from=3-7, to=3-9]
	\arrow["C"{description}, no head,
	from=3-7, to=1-7]
	\arrow["F\sharp"{description}, no head, from=3-7, to=5-7]
	\arrow["D"{description}, no head, 
	from=3-7, to=5-9]
	\arrow["B"{description}, no head, from=3-7, to=5-5]
	\arrow["F"{description}, no head, from=3-7, to=1-9]
	\arrow["A\flat"{description}, no head, from=3-7, to=1-5]
	\arrow["E\flat"{description}, no head, from=1-9, to=1-11]
	\arrow["E\flat"{description}, no head, from=3-9, to=3-11]
	\arrow["C"{description}, no head, from=1-11, to=3-11]
	\arrow["E\flat"{description}, no head, from=3-5, to=3-7]
	\arrow["C"{description}, no head, from=1-5, to=3-5]
	\arrow["F\sharp"{description}, no head, from=5-5, to=3-5]
	\arrow["A"{description}, no head, from=3-11, to=3-13]
	\arrow["A"{description}, no head, from=1-11, to=1-13]
	\arrow["C"{description}, no head, from=1-13, to=3-13]
	\arrow["F\sharp"{description}, no head, from=3-13, to=5-13]
	\arrow["A"{description}, no head, from=5-9, to=5-11]
	\arrow["F\sharp"{description}, no head, from=3-11, to=5-11]
	\arrow["D\sharp"{description}, no head, from=5-11, to=5-13]
	\arrow["A"{description}, no head, from=3-5, to=3-3]
	\arrow["C"{description}, no head, from=3-3, to=1-3]
	\arrow["A"{description}, no head, from=1-3, to=1-5]
	\arrow["F"{description}, no head, from=3-11, to=1-13]
	\arrow["A\flat"{description}, no head, from=1-9, to=3-11]
	\arrow["D"{description}, no head, from=3-11, to=5-13]
	\arrow["B"{description}, no head, from=5-9, to=3-11]
	\arrow["F\sharp"{description}, no head, from=3-3, to=5-3]
	\arrow["A"{description}, no head, from=5-3, to=5-5]
	\arrow["D"{description}, no head, from=3-3, to=5-5]
	\arrow["F"{description}, no head, from=3-3, to=1-5]
	\arrow["E\flat"{description}, no head, from=1-3, to=1-1]
	\arrow["C"{description}, no head, from=1-1, to=3-1]
	\arrow["D\sharp"{description}, no head, from=5-1, to=5-3]
	\arrow["E\flat"{description}, no head, from=3-1, to=3-3]
	\arrow["B"{description}, no head, from=3-3, to=5-1]
	\arrow["A\flat"{description}, no head, from=1-1, to=3-3]
	\arrow["F\sharp"{description}, no head, from=3-1, to=5-1]
\end{tikzcd}};
\end{tikzpicture}
\caption{A major edge-tonnetz on a $B_2$ tiling of the plane with fundamental alcove shown in gray. This tonnetz tiles the plane doubly periodically. Alternatively it quotients to a major edge-tonnetz on a torus with $8$ triangles, $12$ edges, and $4$ vertices. }\label{f:B2}
\end{figure}
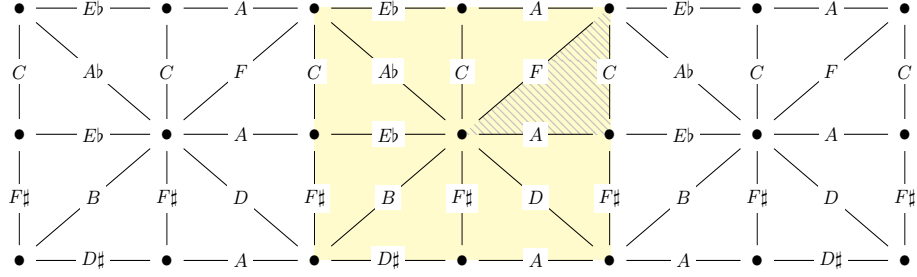

\begin{remark}\label{r:angles} There are many $90^\circ$ angles in this $B_2$ tonnetz, which is what leads to the diminished seventh chords appearing here. Namely, a rotation by $90^\circ$ degrees is a symmetry of order $4$, as is transposition by a minor third. 
Where the $90^\circ$ angles are split into two $45^\circ$ angles we have split the minor third (modulo the octave) into a minor sixth and a perfect fifth. If we didn't care about using the $12$-note system, we could have chosen instead of $F$ the note half-way between $E$ and $F$ for the long edge of the fundamental alcove. This would have given us a completely regular arrangement with a $45^\circ$ symmetry about the center.  
\end{remark}
\subsection{}\label{s:C2}
To obtain a $C_2$ version of the $B_2$ tonnetz from Section~\ref{s:B2}, we need to reverse the arrow in the Dynkin diagram, which means we swap which edge is short and which is long in the fundamental alcove. We still label the first edge of the fundamental alcove (along the $x$-axis) by $A$, as before, and we label the second again by $F$. But now we have a right angle at the end of the edge labelled $F$, meaning the remaining  side of the fundamental alcove should be a minor third away from $F$, see Remark~\ref{r:angles}. This leads us to choose $D$ for this side, and so the triad associated to the fundamental alcove is $D,F,A$, that is, $D$-minor.

The resulting $C_2$ tonnetz is shown in Figure~\ref{f:C2}. It is analogous to the $B_2$ one, but with major triads  replaced by minor ones. We can again transpose up or down a tone, and obtain in total three minor edge-tonnetzes, such that every minor triad is found in precisely one of them,

\begin{center}
$C_2$\quad \dynkin[edge length=.8cm,
root radius=.09cm,labels={A,F}]{C}{2} (pictured),\qquad
\quad \dynkin[edge length=.8cm,
root radius=.09cm,labels={B,G}]{C}{2},\qquad 
\quad \dynkin[edge length=.8cm,
root radius=.09cm,labels={G,E\flat}]{C}{2} .
\end{center}

Finally, of course there is a choice underlying which gives major and which gives minor chords, between $B_2$ and $C_2$. For example, replacing $F$ by $E$ in the Dynkin diagrams associated to the pictured tonnetz diagrams switches these roles. 
\begin{remark}
We note that for the type $B_2$ tonnetz we constructed and its type $C_2$ counterpart, while they are different, one containing the $F,A\flat, B,D$ major chords, and the other their minor versions, they are `the same' in terms of key signatures associated to the chords that appear. For example, $D$-minor is the relative minor for $F$ major and so forth. The two constructions also have in common the absence of the same four notes, $A\sharp, C\sharp, E,G$, which are exactly the notes that 
do not appear in any of the major or minor triads of $F,A\flat, B$ and $D$.
\end{remark}
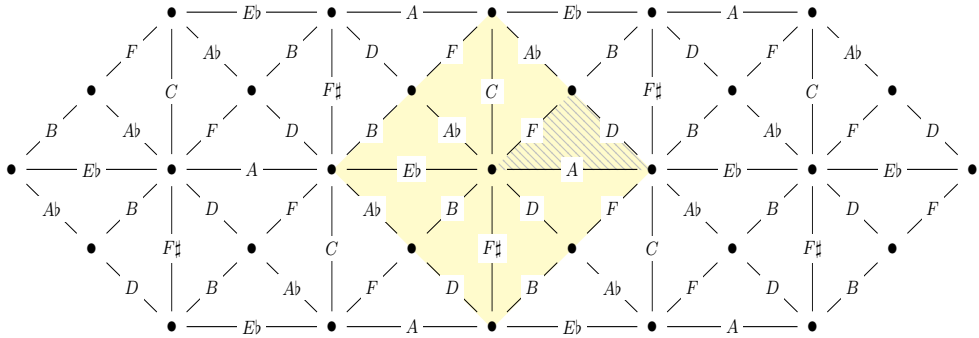
\begin{figure}
\begin{tikzpicture}[baseline= (a).base]

\fill[yellow!25] (-2.1,0) -- (0,2.1) -- (2.1,0) -- (0,-2.1)-- cycle;

\fill[ pattern=north west lines, pattern color=lightgray]  (0,0) -- (2.1,0) -- (1.05,1.03)-- cycle;

\node[scale=.91,xscale=.8] (a) at (0,0){
\begin{tikzcd}
	&& \bullet && \bullet && \bullet && \bullet && \bullet & {} \\
	& \bullet && \bullet && \bullet && \bullet && \bullet && \bullet \\
	\bullet && \bullet && \bullet && \bullet && \bullet && \bullet && \bullet \\
	& \bullet && \bullet && \bullet && \bullet && \bullet && \bullet \\
	&& \bullet && \bullet && \bullet && \bullet && \bullet
	\arrow["A"{description}, no head, from=3-7, to=3-9]
	\arrow["F\sharp"{description}, no head, from=3-7, to=5-7]
	\arrow["E\flat"{description}, no head, from=3-7, to=3-5]
	\arrow["B"{description}, no head, from=3-5, to=2-6]
	\arrow["F"{description}, no head, from=2-6, to=1-7]
	\arrow["C"{description}, no head, from=3-7, to=1-7]
	\arrow["A\flat"{description}, no head, from=3-7, to=2-6]
	\arrow["A\flat"{description}, no head, from=2-8, to=1-7]
	\arrow["D"{description}, no head, from=2-8, to=3-9]
	\arrow["F"{description}, no head, from=3-7, to=2-8]
	\arrow["B"{description}, no head, from=5-7, to=4-8]
	\arrow["F"{description}, no head, from=4-8, to=3-9]
	\arrow["D"{description}, no head, from=3-7, to=4-8]
	\arrow["A\flat"{description}, no head, from=3-5, to=4-6]
	\arrow["D"{description}, no head, from=4-6, to=5-7]
	\arrow["B"{description}, no head, from=3-7, to=4-6]
	\arrow["B"{description}, no head, from=2-8, to=1-9]
	\arrow["A\flat"{description}, no head, from=4-8, to=5-9]
	\arrow["E\flat"{description}, no head, from=5-9, to=5-7]
	\arrow["E\flat"{description}, no head, from=1-7, to=1-9]
	\arrow["F\sharp"{description}, no head, from=1-9, to=3-9]
	\arrow["A"{description}, no head, from=1-9, to=1-11]
	\arrow["D"{description}, no head, from=1-9, to=2-10]
	\arrow["B"{description}, no head, from=3-9, to=2-10]
	\arrow["F"{description}, no head, from=2-10, to=1-11]
	\arrow["A\flat"{description}, no head, from=2-10, to=3-11]
	\arrow["C"{description}, no head, from=1-11, to=3-11]
	\arrow["E\flat"{description}, no head, from=3-9, to=3-11]
	\arrow["C"{description}, no head, from=3-9, to=5-9]
	\arrow["F"{description}, no head, from=5-9, to=4-10]
	\arrow["A\flat"{description}, no head, from=3-9, to=4-10]
	\arrow["B"{description}, no head, from=4-10, to=3-11]
	\arrow["D"{description}, no head, from=4-10, to=5-11]
	\arrow["A"{description}, no head, from=5-9, to=5-11]
	\arrow["F\sharp"{description}, no head, from=5-11, to=3-11]
	\arrow["A\flat"{description}, no head, from=1-11, to=2-12]
	\arrow["F"{description}, no head, from=3-11, to=2-12]
	\arrow["D"{description}, no head, from=2-12, to=3-13]
	\arrow["E\flat"{description}, no head, from=3-11, to=3-13]
	\arrow["D"{description}, no head, from=3-11, to=4-12]
	\arrow["B"{description}, no head, from=5-11, to=4-12]
	\arrow["F"{description}, no head, from=4-12, to=3-13]
	\arrow["D"{description}, no head, from=2-6, to=1-5]
	\arrow["A"{description}, no head, from=1-5, to=1-7]
	\arrow["B"{description}, no head, from=1-5, to=2-4]
	\arrow["D"{description}, no head, from=2-4, to=3-5]
	\arrow["F"{description}, no head, from=2-4, to=3-3]
	\arrow["D"{description}, no head, from=3-3, to=4-4]
	\arrow["F"{description}, no head, from=4-4, to=3-5]
	\arrow["A\flat"{description}, no head, from=4-4, to=5-5]
	\arrow["F"{description}, no head, from=5-5, to=4-6]
	\arrow["A"{description}, no head, from=5-7, to=5-5]
	\arrow["B"{description}, no head, from=4-4, to=5-3]
	\arrow["E\flat"{description}, no head, from=5-3, to=5-5]
	\arrow["D"{description}, no head, from=5-3, to=4-2]
	\arrow["B"{description}, no head, from=4-2, to=3-3]
	\arrow["E\flat"{description}, no head, from=1-5, to=1-3]
	\arrow["A\flat"{description}, no head, from=1-3, to=2-4]
	\arrow["F"{description}, no head, from=1-3, to=2-2]
	\arrow["A\flat"{description}, no head, from=2-2, to=3-3]
	\arrow["B"{description}, no head, from=2-2, to=3-1]
	\arrow["A\flat"{description}, no head, from=3-1, to=4-2]
	\arrow["C"{description}, no head, from=1-3, to=3-3]
	\arrow["F\sharp"{description}, no head, from=1-5, to=3-5]
	\arrow["F\sharp"{description}, no head, from=3-3, to=5-3]
	\arrow["C"{description}, no head, from=3-5, to=5-5]
	\arrow["A"{description}, no head, from=3-3, to=3-5]
	\arrow["E\flat"{description}, no head, from=3-3, to=3-1]
\end{tikzcd}};
\end{tikzpicture}
\caption{A minor edge-tonnetz on a type $C_2$ tiling of the plane with fundamental alcove indicated. This tonnetz quotients to a minor edge-tonnetz on a torus with chords, $D$-minor, $F$-minor, $A\flat$-minor and $B$-minor, each appearing twice.}\label{f:C2}
\end{figure}

\section{A dual pair of major/minor edge tonnetzes of type $G_2$}\label{s:G2}
 
The $G_2$ tiling of the plane hosts a very natural major/minor edge-tonnetz. Namely, this tonnetz is based on the circle of fifths, and features six major and six minor triads in an alternating pattern. Langlands duality transforms this tonnetz into its complementary one which contains precisely the remaining triads, turning ones that were originally major into minor ones and vice versa.

In Figure~\ref{f:G2-full}, the first $G_2$-tonnetz can be seen. We have a fundamental domain that periodically tiles the plane, namely in this case it is a hexagon comprising $12$ triangles. Its edge labelling is completely shown on the right-hand side of Figure~\ref{f:G2-full}. In the middle it is indicated what are the major and minor (lower-case) triads appearing. On the left-hand side the outer edge labels are colour-coded to indicate how to glue to a torus. The actual gluing is illustrated in Figure~\ref{f:G2ontorus}.
\begin{figure}
\begin{center}
\begin{tikzpicture}[]
\clip (-7,-2.8) rectangle (7.5,2.7);
\node[rotate=30,scale=.91] (a) at (0,0){
\begin{tikzpicture}[scale=0.8,
  xscale=3,yscale=5.199, 
  note/.style={
  minimum size=0.75cm},
  every node/.append style={font=\footnotesize}
  ]

\draw[fill=yellow!25] (2,0.5) -- (1.5,1) -- (2,1.5) -- (3,1.5) -- (3.5,1)--(3,0.5)-- cycle;

\fill[ pattern=north west lines, pattern color=lightgray]  (3.25,1.25) -- (3,1.5) -- (2.5,1)-- cycle;




\begin{scope}
\newcommand*\columns{5}
\newcommand*\rows{2}
\clip(0,-\pgflinewidth) rectangle (\columns,\rows);
\foreach \y in {0,0.5,1,...,\rows} 
  \draw (0,\y) -- (\columns,\y);
\foreach \z in {-1.5,-0.5,...,\columns} 
{
    \draw (\z,\rows) -- (\z+2,0);
    \draw (\z,0) -- (\z+2,\rows);
}
\end{scope}

\draw (4,1.7)--(4,-0.25);
\draw (2.5,2.3)--(2.5,-0.25);
\draw (1,2.15)--(1,0.25);
\draw (0.7,0.4)--(4.3,1.6);
\draw (0,1.17)--(2.95,2.15);
\draw (2.15,-0.12)--(5,0.837);
\draw (0,1.83)-- (5,0.17);

\draw (2.5,2)--(2.65,2.15);
\draw (2.5,2)--(2.35,2.15);
\draw (1.5,2)--(1.65,2.15);
\draw (1.5,2)--(1.35,2.15);
\draw (0.5,2)--(0.65,2.15);
\draw (0.5,2)--(0.35,2.15);

\draw(4.5,0)--(4.75,-0.25);
\draw (4.5,0)--(4.25,-0.25);
\draw (3.5,0)--(3.75,-0.25);
\draw (3.5,0)--(3.25,-0.25);
\draw[red] (2.5,0)--(2.75,-0.25);
\draw[red] (2.5,0)--(2.25,-0.25);
\draw (1.5,0)--(1.75,-0.25);
\draw (1.5,0)--(1.25,-0.25);
\draw (0.5,0)--(0.75,-0.25);
\draw (0.5,0)--(0.25,-0.25);


\draw (0.7,2) node [rotate=-30,rectangle,fill=red!25] {$E$};
\draw (1.2,2) node [rotate=-30,rectangle,fill=red!25] {$B\flat$};

\draw (0.37,1.87) node [rotate=-30,rectangle,fill=purple!15] {$A\flat$};
\draw (1.64,1.84) node [rotate=-30,rectangle,fill=green!20] {$F\sharp$};

\draw (0.1,1.62) node [rotate=-30,rectangle,fill=purple!15] {$D$};
\draw (1.88,1.63) node[rotate=-30,rectangle,fill=green!20]  {$C$};

\draw (0.12,1.37) node [rotate=-30,rectangle,fill=green!20] {$G\flat$};
\draw (1.75,1.37) node [rotate=-30,rectangle,fill=purple!15] {$G\sharp$};

\draw (0.37,1.13) node [rotate=-30,rectangle,fill=green!20] {$C$};
\draw (1.55,1.15) node   [rotate=-30,rectangle,fill=purple!15] {$D$};

\draw (0.7,1) node [rotate=-30,rectangle,fill=red!25] {$E$};
\draw (1.2,1) node [rotate=-30,rectangle,fill=red!25] {$A\sharp$};

]


\draw (3,1.27) node[rotate=-30,circle,draw] {$\mathbf D$};

\draw (3.2,1.11) node[rotate=-30,circle,draw] {$\mathbf a$};

\draw (3.2,0.9) node[rotate=-30,circle,draw] {$\mathbf E$};

\draw (2.97,0.72) node[rotate=-30,circle,draw] {$\mathbf {b}$};

\draw (2.67,0.61) node[rotate=-30,circle,draw] {$\mathbf {F\sharp}$};

\draw (2.32,0.61) node[rotate=-30,circle,draw] {$\mathbf {c\sharp}$};

\draw (2.02,0.72) node[rotate=-30,circle,draw] {$\mathbf {A\flat}$};

\draw (1.87,0.9) node[rotate=-30,circle,draw] {$\mathbf e\flat$};

\draw (1.87,1.1) node[rotate=-30,circle,draw] {$\mathbf B\flat$};

\draw (2.02,1.27) node[rotate=-30,circle,draw] {$\mathbf f$};

\draw (2.35,1.35) node[rotate=-30,circle,draw] {$\mathbf C$};
\draw (2.67,1.35) node[rotate=-30,circle,draw] {$\mathbf g$};

\draw (3.17,0.36) node[rotate=-30,rectangle,fill=white!10] {$G\flat$};
\draw (3.38,0.14) node[rotate=-30,rectangle,fill=white!10] {$C$};

\draw (3.23,0.62) node[rotate=-30] {$D$};
\draw (3.46,0.84) node[rotate=-30] {$A\flat$};

\draw (3.75,0) node[rotate=-30,rectangle,fill=white!10] {$E$};
\draw (4.25,0) node[rotate=-30,rectangle,fill=white!10] {$A\sharp$};
\draw (3.75,1) node[rotate=-30,rectangle,fill=white!10] {$E$};
\draw (4.25,1) node[rotate=-30,rectangle,fill=white!10] {$B\flat$};
\draw (4.65,0.87) node[rotate=-30,rectangle,fill=white!10] {$F\sharp$};
\draw (4.85,0.64) node[rotate=-30,rectangle,fill=white!10] {$C$};
\draw (4.85,0.37) node[rotate=-30,rectangle,fill=white!10] {$G\sharp$};

\draw (3.65,0.25) node[rotate=-30] {$A\flat$};

\draw (3.77,0.2) node[rotate=-30] {$G\sharp$};
\draw (3.55,0.36) node[rotate=-30,rectangle,fill=white!10] {$E\flat$};
\draw (4,0.19) node[rotate=-30,rectangle,fill=white!10] {$C\sharp$};
\draw (4.25,0.24) node[rotate=-30,rectangle,fill=white!10] {$F\sharp$};
\draw (4.53,0.33) node[rotate=-30,rectangle,fill=white!10] {$B$};
\draw (4.62,0.15) node[rotate=-30,rectangle,fill=white!10] {$D$};
\draw (4.6,0.5) node[rotate=-30,rectangle,fill=white!10] {$E$};
\draw (3.5,0.5) node[rotate=-30,rectangle,fill=white!10] {$B\flat$};
\draw (3.6,0.65) node[rotate=-30,rectangle,fill=white!10] {$F$};
\draw (3.8,0.73) node[rotate=-30,rectangle,fill=white!10] {$C$};
\draw (4,0.76) node[rotate=-30,rectangle,fill=white!10] {$G$};
\draw (4.23,0.73) node[rotate=-30,rectangle,fill=white!10] {$D$};
\draw (4.45,0.65) node[rotate=-30,rectangle,fill=white!10] {$A$};
\end{tikzpicture}};
\end{tikzpicture}
\end{center}
\caption{A major/minor edge-tonnetz of type $G_2$ with fundamental alcove shaded.}\label{f:G2-full}
\end{figure}

\subsection{Vertices and Symmetry} \label{s:G2vertices}
The $G_2$ tonnetz on a torus has three types of vertices. 
\begin{itemize}
\item There is one vertex in the center that has all $12$ pitch-classes attached to it. 
\item There are two vertices that come from the corners of the hexagon (two that remain after gluing). These are labelled by augmented triads, one by $C, E, G\sharp$, and of the other by $D, F\sharp,A\sharp$. (To be formally precise, the label is of course a multiset, e.g. $\{C,C,E,E,G\sharp,G\sharp\}$ in the first case, since there are six edges contributing notes to the vertex.) 
\item Finally, there are three vertices that are $4$-valent. Each one of these is labelled by one of the three diminished seventh chords. 
\end{itemize}

Before gluing we can observe the following rotational symmetries of the tonnetz in the plane.
\begin{itemize}
\item 
Rotating  clockwise by $60^\circ$ about the central vertex amounts to transposing up by a whole tone. 
\item
Rotating clockwise by $120^\circ$ about the corners of the fundamental hexagon corresponds to transposing up by a major third. 
\item 
For each $4$-valent vertex, rotation by $180^\circ$ corresponds to transposition by a tritone.  
\end{itemize}
We note that the order of the rotational symmetry about a vertex is always half the valency here.
\subsection{Dynkin diagrams and the Langlands dual $G_2$ tonnetz} The fundamental alcove for the $G_2$ tonnetz in Figure~\ref{f:G2-full} is the one containing the major $D$ triad, as indicated in the figure. The edges corresponding to the reflection hyperplanes of the finite Weyl group $G_2$ are labelled $A$ and $D$. This leads to the following labelling of the $G_2$ Dynkin diagram,
\begin{center}
\quad \dynkin[edge length=.8cm,
root radius=.09cm,labels={A,D}]{G}{2}.
\end{center} 

\begin{figure}
\centering
\includegraphics[scale=0.4,clip,trim=0.5cm 0cm 0.5cm 0.5cm]{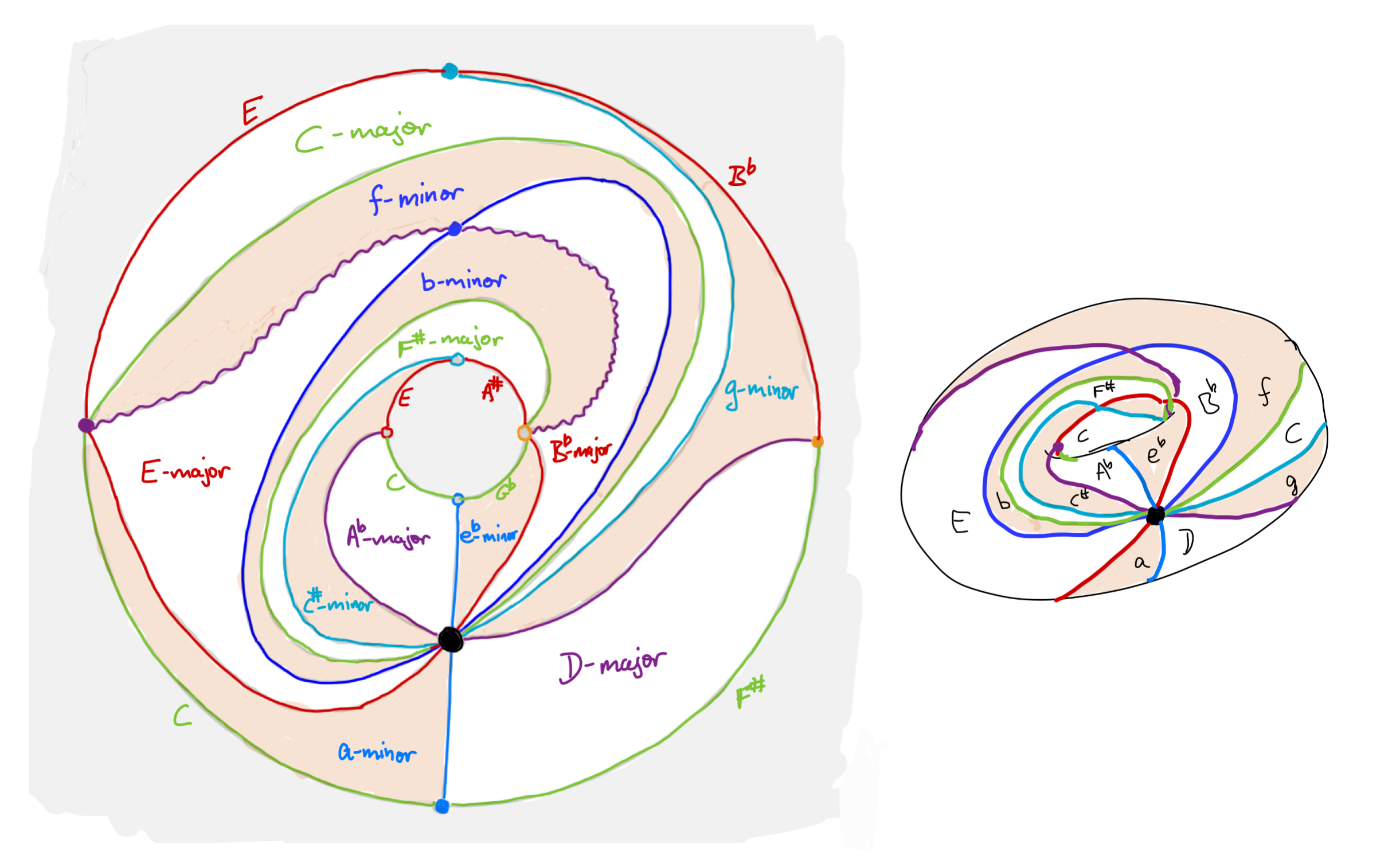}
\caption{A realisation of the first major/minor tonnetz of type $G_2$ on a torus. The intermediate image on the left is the cylinder obtained by identifying the purple $A\flat/D$ and $G\sharp/D$ sides in Figure~\ref{f:G2-full} (flattend to an annulus). Gluing the inner to the outer circle gives the torus on the right-hand side.}\label{f:G2ontorus}
\end{figure}

The Langlands dual $G_2$ tonnetz is now obtained by `swapping long and short sides' of our fundamental alcove. Namely, as in Section~\ref{s:C2}, we alter side lengths,  turning the side of the fundamental alcove that is labelled $D$ into the shorter side and $A$ into the longer one. Then we get a right angle at the end of the side labelled $D$ which means we should put an $F$ on the remaining side, since right angles correspond to minor thirds. This is illustrated in Figure~\ref{f:G2-v2}. The $D$-major triad has become $D$-minor, and all other triads have similarly reversed their quality. 

In terms of the Dynkin diagram, we could represent this new tonnetz by changing the direction of the arrow from the old one, as happened in the change from $B_2$ to $C_2$. But this is not conventional for $G_2$, and so instead we just switch around the labels,
\begin{center}
\dynkin[edge length=.8cm,
root radius=.09cm,labels={D,A}]{G}{2}.\qquad
\end{center}
The analysis of the vertices and symmetries of this tonnetz is exactly the same as in Section~\ref{s:G2vertices}, with the one change that the augmented triads attached to the vertices that are the corners of the fundamental hexagons are now $F,A,C\sharp$ and $G,B,D\sharp$, the two that were left out before. 

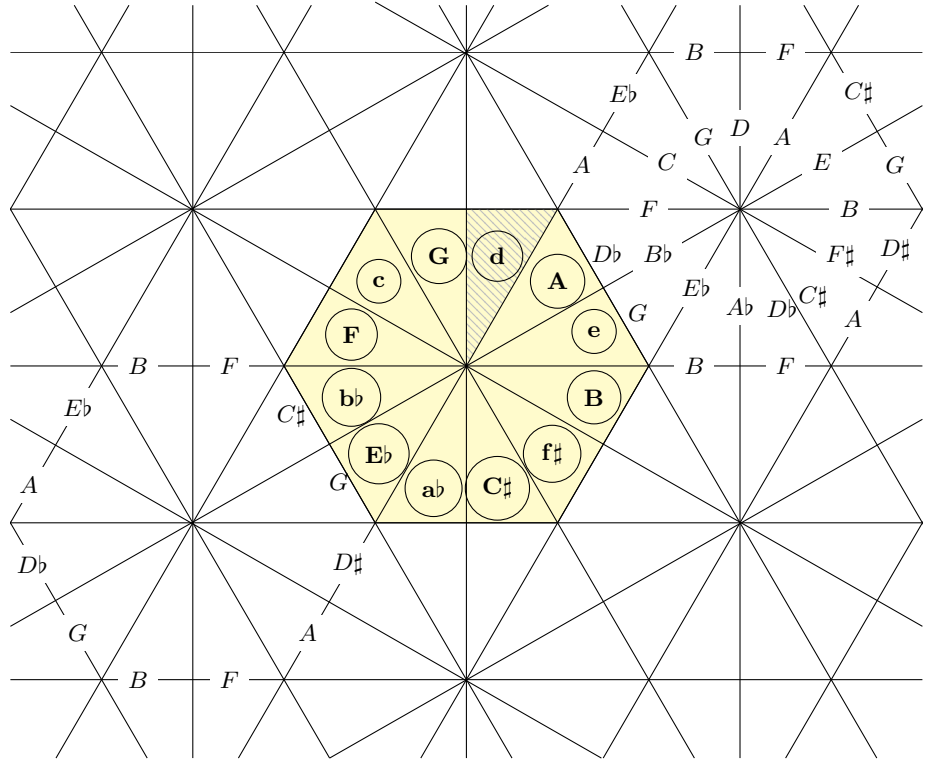
\begin{figure}
\begin{center}
\begin{tikzpicture}[scale=0.8,
  xscale=3,yscale=5.199,
  note/.style={
  minimum size=0.75cm},
  every node/.append style={font=\footnotesize}
  ]

\draw[fill=yellow!25] (2,0.5) -- (1.5,1) -- (2,1.5) -- (3,1.5) -- (3.5,1)--(3,0.5)-- cycle;

\fill[ pattern=north west lines, pattern color=lightgray]  (2.5,1.5) -- (3,1.5) -- (2.5,1)-- cycle;




\begin{scope}
\newcommand*\columns{5}
\newcommand*\rows{2}
\clip(0,-\pgflinewidth) rectangle (\columns,\rows);
\foreach \y in {0,0.5,1,...,\rows} 
  \draw (0,\y) -- (\columns,\y);
\foreach \z in {-1.5,-0.5,...,\columns} 
{
    \draw (\z,\rows) -- (\z+2,0);
    \draw (\z,0) -- (\z+2,\rows);
}
\end{scope}

\draw (4,2.15)--(4,-0.25);
\draw (2.5,2.15)--(2.5,-0.25);
\draw (1,2.15)--(1,-0.25);
\draw (0,0.17)--(5,1.83);
\draw (0,1.17)--(2.95,2.15);
\draw (1.75,-0.25)--(5,0.83);
\draw (0,0.83)-- (3.24,-0.25);
\draw (0,1.83)-- (5,0.17);
\draw (2,2.16)-- (5,1.17);

\draw (4.5,2)--(4.65,2.15);
\draw (4.5,2)--(4.35,2.15);
\draw (3.5,2)--(3.65,2.15);
\draw (3.5,2)--(3.35,2.15);
\draw (2.5,2)--(2.65,2.15);
\draw (2.5,2)--(2.35,2.15);
\draw (1.5,2)--(1.65,2.15);
\draw (1.5,2)--(1.35,2.15);
\draw (0.5,2)--(0.65,2.15);
\draw (0.5,2)--(0.35,2.15);

\draw (4.5,0)--(4.75,-0.25);
\draw (4.5,0)--(4.25,-0.25);
\draw (3.5,0)--(3.75,-0.25);
\draw (3.5,0)--(3.25,-0.25);
\draw (2.5,0)--(2.75,-0.25);
\draw (2.5,0)--(2.25,-0.25);
\draw (1.5,0)--(1.75,-0.25);
\draw (1.5,0)--(1.25,-0.25);
\draw (0.5,0)--(0.75,-0.25);
\draw (0.5,0)--(0.25,-0.25);


\draw (0.7,1) node [rectangle,fill=white!10] {$B$};
\draw (1.2,1) node [rectangle,fill=white!10] {$F$};

\draw (0.37,0.87) node [rectangle,fill=white!10] {$E\flat$};
\draw (1.54,0.84) node  {$C\sharp$};

\draw (0.1,0.62) node [rectangle,fill=white!10] {$A$};
\draw (1.8,0.63) node  {$G$};

\draw (0.12,0.37) node [rectangle,fill=white!10] {$D\flat$};
\draw (1.85,0.37) node [rectangle,fill=white!10] {$D\sharp$};

\draw (0.37,0.15) node [rectangle,fill=white!10] {$G$};
\draw (1.63,0.15) node   [rectangle,fill=white!10] {$A$};

\draw (0.7,0) node [rectangle,fill=white!10] {$B$};
\draw (1.2,0) node [rectangle,fill=white!10] {$F$};

]


\draw (3,1.27) node[circle,draw] {$\mathbf A$};

\draw (3.2,1.11) node[circle,draw] {$\mathbf e$};

\draw (3.2,0.9) node[circle,draw] {$\mathbf B$};

\draw (2.97,0.72) node[circle,draw] {$\mathbf {f\sharp}$};

\draw (2.67,0.61) node[circle,draw] {$\mathbf {C\sharp}$};

\draw (2.32,0.61) node[circle,draw] {$\mathbf {a\flat}$};

\draw (2.02,0.72) node[circle,draw] {$\mathbf {E\flat}$};

\draw (1.87,0.9) node[circle,draw] {$\mathbf b\flat$};

\draw (1.87,1.1) node[circle,draw] {$\mathbf F$};

\draw (2.02,1.27) node[circle,draw] {$\mathbf c$};

\draw (2.35,1.35) node[circle,draw] {$\mathbf G$};
\draw (2.67,1.35) node[circle,draw] {$\mathbf d$};

\draw (3.27,1.36) node {$D\flat$};
\draw (3.44,1.17) node {$G$};

\draw (3.13,1.64) node[rectangle,fill=white!10] {$A$};
\draw (3.36,1.87) node[rectangle,fill=white!10] {$E\flat$};
\draw (3.75,1) node[rectangle,fill=white!10] {$B$};
\draw (4.25,1) node[rectangle,fill=white!10] {$F$};
\draw (3.75,2) node[rectangle,fill=white!10] {$B$};
\draw (4.25,2) node[rectangle,fill=white!10] {$F$};
\draw (4.65,1.87) node[rectangle,fill=white!10] {$C\sharp$};
\draw (4.85,1.64) node[rectangle,fill=white!10] {$G$};
\draw (4.85,1.37) node[rectangle,fill=white!10] {$D\sharp$};

\draw (3.76,1.25) node[rectangle,fill=white!10] {$E\flat$};
\draw (3.55,1.36) node[rectangle,fill=white!10] {$B\flat$};
\draw (4,1.19) node[rectangle,fill=white!10] {$A\flat$};
\draw (4.4,1.22) node {$C\sharp$};
\draw (4.23,1.185) node {$D\flat$};
\draw (4.55,1.35) node[rectangle,fill=white!10] {$F\sharp$};
\draw (4.62,1.15) node[rectangle,fill=white!10] {$A$};
\draw (4.6,1.5) node[rectangle,fill=white!10] {$B$};
\draw (3.5,1.5) node[rectangle,fill=white!10] {$F$};
\draw (3.6,1.65) node[rectangle,fill=white!10] {$C$};
\draw (3.8,1.73) node[rectangle,fill=white!10] {$G$};
\draw (4,1.76) node[rectangle,fill=white!10] {$D$};
\draw (4.23,1.73) node[rectangle,fill=white!10] {$A$};
\draw (4.45,1.65) node[rectangle,fill=white!10] {$E$};
\end{tikzpicture}
\end{center}
\caption{The `Langlands dual' major/minor edge-tonnetz of type $G_2$. The fundamental alcove is the one containing the minor $D$ chord.}\label{f:G2-v2}
\end{figure}

Finally, we note that each of the two edge tonnetzes constructed in this section is a major/minor tonnetz, but it is not complete. Nevertheless, the two taken together form a (disconnected) complete major/minor edge tonnetz.

\section{Two major tritone-edge tonnetz examples on the $A_2$ tiling of the plane}\label{s:tritone}

So far we have been focusing on constructing edge-tonnetz examples. We now construct a new kind of tonnetz in which every edge has two notes associated to it. Namely, in this section we give two examples of a major tritone-edge tonnetz. The chords are all the major triads whose base notes run through a whole-tone scale, and every edge individually has two notes that are a tritone apart. Which note belongs to which triad is illustrated by which side of the edge it is written on. Similarly, which vertex a note is belongs to is illustrated by proximity.

\subsection{}\label{s:tritone1} The first example is given in Figure~\ref{f:tritone1}, and it is the simpler of the two. It is based on a single hexagon which repeats, periodically tiling the plane. The three notes of the triad associated to a triangle are always read off from the base note going around the edges in a clockwise orientation. In the central hexagon in the figure, we have indicated which major triad is represented by each triangle. Note how the edge labelled $A$ and $D^\sharp$ lies between $A$-major and $B$-major. It has $A$ above the line as part of the $A$-major triad and $D^\sharp$ below the line as part of the $B$-major triad. Also indicated by the placement of the notes is which vertex they map to. The full data of this tonnetz includes a set of six notes associated to each vertex, a tritone pair associated to each edge, and triad associated to each triangular face (all compatible with one another).  

This tonnetz has a variety of symmetries. Clockwise rotation by $120^\circ$ around any vertex corresponds to raising every note by a major third. Moreover clockwise rotation by $60^\circ$ around the central vertex of the hexagon  corresponds to raising every note by a whole-tone. 

We can identify opposite edges of the fundamental hexagon, highlighted in Figure~\ref{f:tritone1}, and obtain a finite tonnetz on a torus triangulated by $6$ triangles. The resulting tonnetz on the torus has only three vertices. One is the one in the center of the fundamental hexagon, and it has associated to it the pitch classes of the whole tone scale: $A, B,D\flat,E\flat,F,G$. The others each have the union of two augmented triads allocated to them, $\{ A, B\flat, C\sharp, D, F, F\sharp\}$ and  $\{G,A\flat, B,C,D\sharp, E\}$.

Note that there are only $6$ distinct major triads in this tonnetz. Transposing every note up by a semitone would give a tonnetz that covers the remaining $6$ major triads.

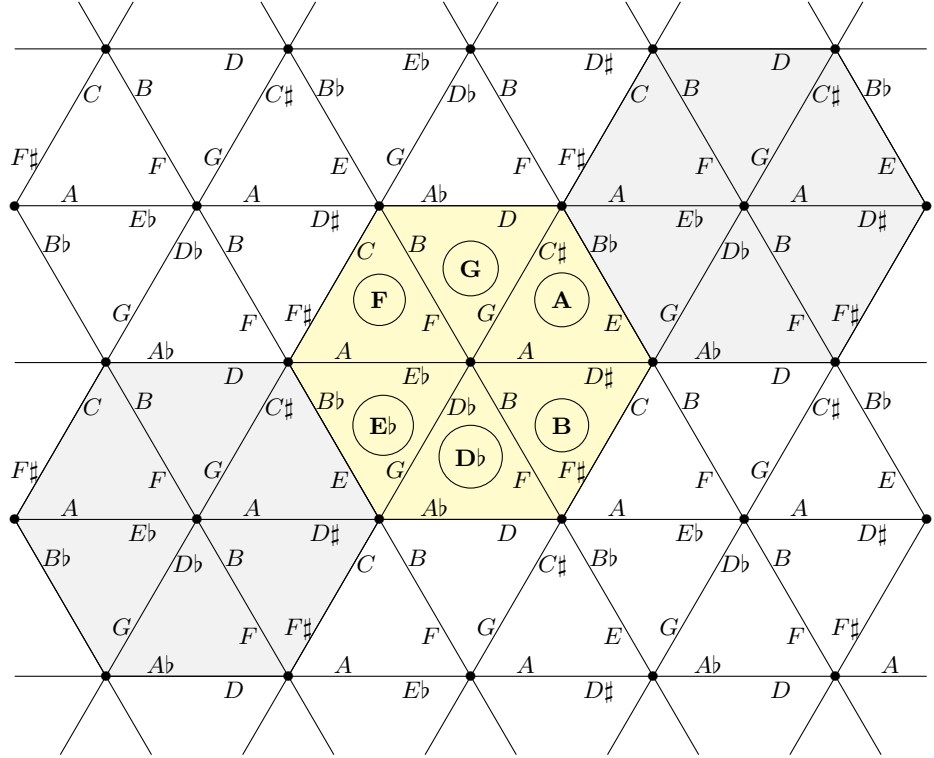
\begin{figure}
\begin{center}
\begin{tikzpicture}[scale=0.8,
  xscale=3,yscale=5.199,
  note/.style={
  minimum size=0.75cm},
  every node/.append style={font=\footnotesize}
  ]

\draw[fill=yellow!25] (2,0.5) -- (1.5,1) -- (2,1.5) -- (3,1.5) -- (3.5,1)--(3,0.5)-- cycle;

\draw[fill=gray!10] (2,0.5) -- (1.5,1) -- (0.5,1) -- (0,0.5) -- (0.5,0)--(1.5,0)-- cycle;


\draw[fill=gray!10] (3,1.5) -- (3.5,2) -- (4.5,2) -- (5,1.5) -- (4.5,1)--(3.5,1)-- cycle;

\foreach [count=\row from 0] \notelist in {
{$\ G$,$F\sharp\,$,$\ G$,$\ G$,$F\sharp\,$},
{$F\sharp\,$,$\ G$,$\ G$,$F\sharp\,$,$\ G$},
  {$\ G$,$F\sharp\,$,$\ G$,$\ G$,$F\sharp\,$},
{$F\sharp\,$,$\ G$,$\ G$,$F\sharp\,$,$\ G$},
  {$\ $,$\ $,$\ $,$\ $,$\ $}}
  \foreach \edgenote [count=\column from 0,evaluate={\colX=\column+0.5-mod(\row,2)/2;}] in \notelist
     \node [note] at (\colX+.065,\row*0.5+.15) {\strut \edgenote};

\foreach [count=\row from 0] \notelist in {
{$D$,$E\flat$,$D\sharp$,$D$},
{$E\flat$,$D\sharp$,$D$,$E\flat$,$D\sharp$},
{$D$,$E\flat$,$D\sharp$,$D$},
{$E\flat$,$D\sharp$,$D$,$E\flat$,$D\sharp$},
{$D$,$E\flat$,$D\sharp$,$D$}}
  \foreach \edgenote [count=\column from 0,evaluate={\colX=\column+0.5-mod(\row,2)/2;}] in \notelist
     \node [note] at (\colX+.7,\row*0.5-.05) {\strut \edgenote};

\foreach [count=\row from 0] \notelist in {
{$A\flat$,$A$,$A$,$A\flat$,$A$},
{$A$,$A$,$A\flat$,$A$,$A$},
{$A\flat$,$A$,$A$,$A\flat$},
{$A$,$A$,$A\flat$,$A$,$A$}}
  \foreach \edgenote [count=\column from 0,evaluate={\colX=\column+0.5-mod(\row,2)/2;}] in \notelist
     \node [note] at (\colX+.3,\row*0.5+.03) {\strut \edgenote};

\foreach [count=\row from 0] \notelist in {
{$D\flat$,$C\ $,$C\sharp$,$D\flat$},
{$C\ $,$C\sharp$,$D\flat$,$C\ $,$C\sharp$},
{$D\flat$,$C\ $,$C\sharp$,$D\flat$},
{$C\ $,$C\sharp$,$D\flat$,$C\ $,$C\sharp$}}
  \foreach \edgenote [count=\column from 0,evaluate={\colX=\column+0.5-mod(\row,2)/2;}] in \notelist
     \node [note] at (\colX+.45,\row*0.5+.35) {\strut \edgenote};

\foreach [count=\row from 0] \notelist in {
{$B\flat$,$B\ $,$B\ $,$B\flat$,$B\ $},
{\ ,$B\ $,$B\flat$,$B\ $,$B\ $,$B\flat$},
{$B\flat$,$B\ $,$B\ $,$B\flat$,$B\ $},
{\ ,$B\ $,$B\flat$,$B\ $,$B\ $,$B\flat$}}
  \foreach \edgenote [count=\column from 0,evaluate={\colX=\column+0.5-mod(\row,2)/2;}] in \notelist
     \node [note] at (\colX+-.27,\row*0.5+.37) {\strut \edgenote};

\foreach [count=\row from 0] \notelist in {
{$F$,$F$,$E$,$F$},
{$F$,$E$,$F$,$F$,$E$},
{$F$,$F$,$E$,$F$},
{$F$,$E$,$F$,$F$,$E$}}
  \foreach \edgenote [count=\column from 0,evaluate={\colX=\column+0.5-mod(\row,2)/2;}] in \notelist
     \node [note] at (\colX+.78,\row*0.5+.12) {\strut \edgenote};

\begin{scope}
\newcommand*\columns{5}
\newcommand*\rows{2}
\clip(0,-\pgflinewidth) rectangle (\columns,\rows);
\foreach \y in {0,0.5,1,...,\rows} 
  \draw (0,\y) -- (\columns,\y);
\foreach \z in {-1.5,-0.5,...,\columns} 
{
    \draw (\z,\rows) -- (\z+2,0);
    \draw (\z,0) -- (\z+2,\rows);
}
\end{scope}

\draw (4.5,2)--(4.65,2.15);
\draw (4.5,2)--(4.35,2.15);
\draw (3.5,2)--(3.65,2.15);
\draw (3.5,2)--(3.35,2.15);
\draw (2.5,2)--(2.65,2.15);
\draw (2.5,2)--(2.35,2.15);
\draw (1.5,2)--(1.65,2.15);
\draw (1.5,2)--(1.35,2.15);
\draw (0.5,2)--(0.65,2.15);
\draw (0.5,2)--(0.35,2.15);

\draw (4.5,0)--(4.75,-0.25);
\draw (4.5,0)--(4.25,-0.25);
\draw (3.5,0)--(3.75,-0.25);
\draw (3.5,0)--(3.25,-0.25);
\draw (2.5,0)--(2.75,-0.25);
\draw (2.5,0)--(2.25,-0.25);
\draw (1.5,0)--(1.75,-0.25);
\draw (1.5,0)--(1.25,-0.25);
\draw (0.5,0)--(0.75,-0.25);
\draw (0.5,0)--(0.25,-0.25);


\foreach [count=\row from 0] \notelist in {
{$\bullet$,$\bullet$,$\bullet$,$\bullet$,$\bullet$},
  {$\bullet$,$\bullet$,$\bullet$,$\bullet$,$\bullet$,$\bullet$},
  {$\bullet$,$\bullet$,$\bullet$,$\bullet$,$\bullet$},
  {$\bullet$,$\bullet$,$\bullet$,$\bullet$,$\bullet$,$\bullet$},
  {$\bullet$,$\bullet$,$\bullet$,$\bullet$,$\bullet$}}
  \foreach \note [count=\column from 0,evaluate={\colX=\column+0.5-mod(\row,2)/2;}] in \notelist
     \node [note] at (\colX,\row*0.5) {\strut \note};



\draw (3,1.2) node[circle,draw] {$\mathbf A$};

\draw (2.5,1.3) node[circle,draw] {$\mathbf G$};

\draw (2,1.2) node[circle,draw] {$\mathbf F$};

\draw (2.02,0.8) node[circle,draw] {$\mathbf {E\flat}$};

\draw (2.5,0.7) node[circle,draw] {$\mathbf {D\flat}$};

\draw (3,0.8) node[circle,draw] {$\mathbf B$};

\end{tikzpicture}
\end{center}
\caption{A major tritone edge-tonnetz with a fundamental region consisting of $6$ triangles indicated in the center.}\label{f:tritone1}
\end{figure}

\subsection{}\label{s:tritone2} To illustrate the fact that the allocation of notes to vertices is a part of the tonnetz that shouldn't be overlooked, we construct another tonnetz with exactly the same chords but that allocates notes to vertices differently and as a result also has different symmetries, see Figure~\ref{f:tritone2}.
\begin{figure}
\begin{center}
\begin{tikzpicture}[scale=0.8,
  xscale=3,yscale=5.199,
  note/.style={
  minimum size=0.75cm},
  every node/.append style={font=\footnotesize}
  ]

\draw[fill=yellow!25] (0.5,1) -- (1.5,2) -- (3.5,2) -- (4.5,1) -- (3.5,0)--(1.5,0)-- cycle;

\foreach [count=\row from 0] \notelist in {
{},
{,$\ G$,$\ G$,$F\sharp$,$\ G$},
  {},
{,$\ G$,$\ G$,$F\sharp$,$\ G$},
  {$\ $,$\ $,$\ $,$\ $,$\ $}}
  \foreach \edgenote [count=\column from 0,evaluate={\colX=\column+0.5-mod(\row,2)/2;}] in \notelist
     \node [note] at (\colX+.065,\row*0.5+.15) {\strut \edgenote};

\foreach [count=\row from 0] \notelist in {
{$\ G$,$F\sharp$,$\ G$,$\ G$,$F\sharp$},
{,,,,},
  {$\ G$,$F\sharp$,$\ G$,$\ G$,$F\sharp$},
{,,,,},
  {$\ $,$\ $,$\ $,$\ $,$\ $}}
  \foreach \edgenote [count=\column from 0,evaluate={\colX=\column+0.5-mod(\row,2)/2;}] in \notelist
     \node [note] at (\colX+.26,\row*0.5+.37) {\strut \edgenote};

\foreach [count=\row from 0] \notelist in {
{$\qquad\qquad D$,,$E\flat\qquad\qquad D\sharp$,,$D\qquad\qquad$},
{$\qquad\qquad E\flat$,,$D\sharp\qquad\qquad D$,,$E\flat\qquad\qquad D\sharp$,},
{,$D\qquad\qquad E\flat$,,$D\sharp\qquad\qquad D$,},
{,$E\flat\qquad\qquad D\sharp$,,$D\qquad\qquad E\flat$,,$D\sharp\qquad\qquad$},
{$\qquad\qquad D$,,$E\flat\qquad\qquad D\sharp$,,$D\qquad\qquad$}}
  \foreach \edgenote [count=\column from 0,evaluate={\colX=\column+0.5-mod(\row,2)/2;}] in \notelist
     \node [note] at (\colX,\row*0.5-.05) {\strut \edgenote};

\foreach [count=\row from 0] \notelist in {
{{},$A\flat\qquad\qquad A$,{},$A\qquad\qquad A\flat$,{}},
{{},$A\qquad\qquad A $,{},$A\flat \qquad\qquad A$,{},$A\qquad$},
{$A\qquad\qquad A\flat$,{},$A\ \qquad\qquad A$,{},$A\flat \qquad\qquad A $},
{$\qquad\qquad A$,{},$A\qquad\qquad A\flat$,{},$A\qquad\qquad A$}}
  \foreach \edgenote [count=\column from 0,evaluate={\colX=\column+0.5-mod(\row,2)/2;}] in \notelist
     \node [note] at (\colX,\row*0.5+.035) {\strut \edgenote};

\foreach [count=\row from 0] \notelist in {
{{},{},{},{}},
{$C\ $,$C\sharp$,$D\flat$,$C\ $,$C\sharp$},
{{},{},{},{}},
{$C\ $,$C\sharp$,$D\flat$,$C\ $,$C\sharp$}}
  \foreach \edgenote [count=\column from 0,evaluate={\colX=\column+0.5-mod(\row,2)/2;}] in \notelist
     \node [note] at (\colX+.435,\row*0.5+.34) {\strut \edgenote};

\foreach [count=\row from 0] \notelist in {
{$D\flat$,$C\ $,$C\sharp$,$D\flat$},
{{},{},{},{},{}},
{$D\flat$,$C\ $,$C\sharp$,$D\flat$,$C\ $},
{{},{},{},{},{}}}
  \foreach \edgenote [count=\column from 0,evaluate={\colX=\column+0.5-mod(\row,2)/2;}] in \notelist
     \node [note] at (\colX+.22,\row*0.5+.12) {\strut \edgenote};

\foreach [count=\row from 0] \notelist in {
{,$B\ $,,$B\flat$,},
{\ ,,$B\flat$,,$B\ $,},
{$B\flat$,,$B\ $,,$B\ $},
{\ ,$B\ $,,$B\ $,,$B\flat$}}
  \foreach \edgenote [count=\column from 0,evaluate={\colX=\column+0.5-mod(\row,2)/2;}] in \notelist
     \node [note] at (\colX+-.27,\row*0.5+.37) {\strut \edgenote};

\foreach [count=\row from 0] \notelist in {
{$B\flat$,,$B\ $,,$B\ $},
{\ ,$B\ $,,$B\ $,,$B\flat$},
{,$B\ $,,$B\flat$,},
{\ ,,$B\flat$,,$B\ $,}}
  \foreach \edgenote [count=\column from 0,evaluate={\colX=\column+0.5-mod(\row,2)/2;}] in \notelist
     \node [note] at (\colX+-.058,\row*0.5+.15) {\strut \edgenote};

\foreach [count=\row from 0] \notelist in {
{{},$F$,{},$E$,{}},
{{},{},$E$,{},$F$,{}},
{{$E$},{},$F$,{},$F$},
{{},$F$,,$F$,,$E$}}
  \foreach \edgenote [count=\column from 0,evaluate={\colX=\column+0.5-mod(\row,2)/2;}] in \notelist
     \node [note] at (\colX+-.22,\row*0.5+.12) {\strut \edgenote};

\foreach [count=\row from 0] \notelist in {
{,$F$,,$F$},
{$F$,,$F$,,$E$},
{$F$,,$E$,},
{,$E$,,$F$,}}
  \foreach \edgenote [count=\column from 0,evaluate={\colX=\column+0.5-mod(\row,2)/2;}] in \notelist
     \node [note] at (\colX+.57,\row*0.5+.34) {\strut \edgenote};

\begin{scope}
\newcommand*\columns{5}
\newcommand*\rows{2}
\clip(0,-\pgflinewidth) rectangle (\columns,\rows);
\foreach \y in {0,0.5,1,...,\rows} 
  \draw (0,\y) -- (\columns,\y);
\foreach \z in {-1.5,-0.5,...,\columns} 
{
    \draw (\z,\rows) -- (\z+2,0);
    \draw (\z,0) -- (\z+2,\rows);
}
\end{scope}

\draw (4.5,2)--(4.65,2.15);
\draw (4.5,2)--(4.35,2.15);
\draw (3.5,2)--(3.65,2.15);
\draw (3.5,2)--(3.35,2.15);
\draw (2.5,2)--(2.65,2.15);
\draw (2.5,2)--(2.35,2.15);
\draw (1.5,2)--(1.65,2.15);
\draw (1.5,2)--(1.35,2.15);
\draw (0.5,2)--(0.65,2.15);
\draw (0.5,2)--(0.35,2.15);

\draw (4.5,0)--(4.75,-0.25);
\draw (4.5,0)--(4.25,-0.25);
\draw (3.5,0)--(3.75,-0.25);
\draw (3.5,0)--(3.25,-0.25);
\draw (2.5,0)--(2.75,-0.25);
\draw (2.5,0)--(2.25,-0.25);
\draw (1.5,0)--(1.75,-0.25);
\draw (1.5,0)--(1.25,-0.25);
\draw (0.5,0)--(0.75,-0.25);
\draw (0.5,0)--(0.25,-0.25);


\foreach [count=\row from 0] \notelist in {
{$\bullet$,$\bullet$,$\bullet$,$\bullet$,$\bullet$},
  {$\bullet$,$\bullet$,$\bullet$,$\bullet$,$\bullet$,$\bullet$},
  {$\bullet$,$\bullet$,$\bullet$,$\bullet$,$\bullet$},
  {$\bullet$,$\bullet$,$\bullet$,$\bullet$,$\bullet$,$\bullet$},
  {$\bullet$,$\bullet$,$\bullet$,$\bullet$,$\bullet$}}
  \foreach \note [count=\column from 0,evaluate={\colX=\column+0.5-mod(\row,2)/2;}] in \notelist
     \node [note] at (\colX,\row*0.5) {\strut \note};



\draw (3,1.2) node[circle,draw] {$\mathbf A$};

\draw (2.5,1.3) node[circle,draw] {$\mathbf G$};

\draw (2,1.2) node[circle,draw] {$\mathbf F$};

\draw (2.02,0.8) node[circle,draw] {$\mathbf {E\flat}$};

\draw (2.5,0.7) node[circle,draw] {$\mathbf {D\flat}$};

\draw (3,0.8) node[circle,draw] {$\mathbf B$};

\end{tikzpicture}
\end{center}
\caption{The periodic major tritone-edge tonnetz with a fundamental domain consisting of $24$ triangles described in Section~\ref{s:tritone2}. This fundamental domain also differs from the one in Figure~\ref{f:Euler}.}\label{f:tritone2}
\end{figure}
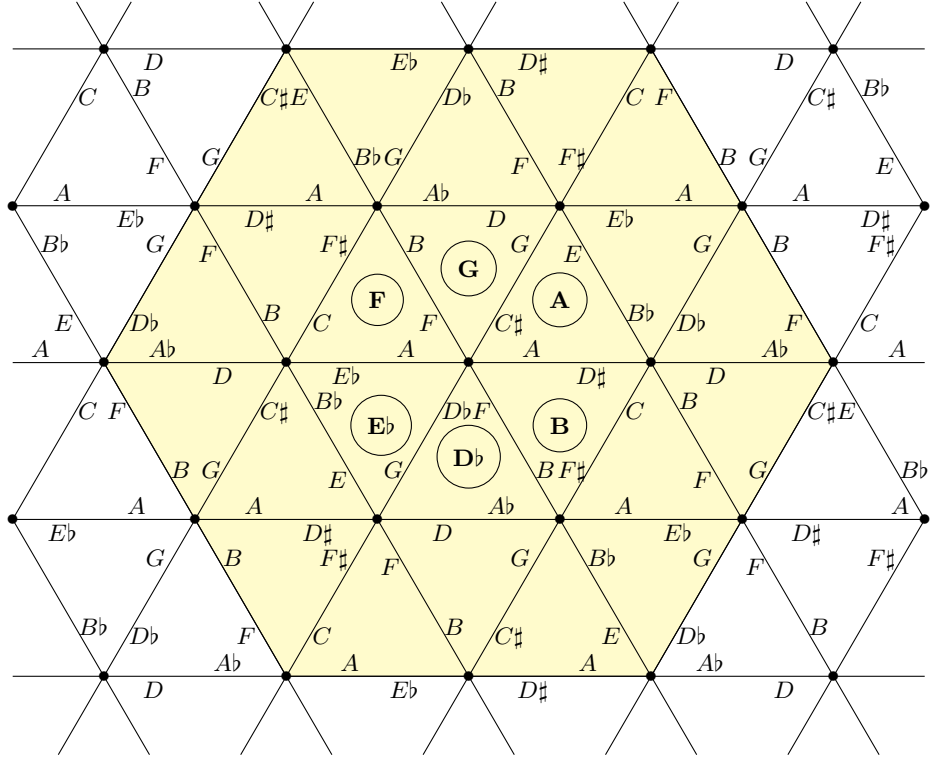
We also give a more fanciful illustration of it to help with visualisation, in  Figure~\ref{f:tritone2hand}. 

We now describe the structure of this tonnetz in a bit more detail. Which notes are associated to a given vertex is indicated in Figure~\ref{f:tritone2hand} by the highlighted `petals' and the stick-figures. For example, the petal vertices are as follows.  
\begin{itemize}
\item For the vertex in the middle $\tonnetz(\rho)=\{A,C^\sharp,F,A,D^\flat,F\}$. Since $D^{\flat}$ and $C^\sharp$ are identified, the underlying set has cardinality $3$ and represents an augmented triad: $A, C^\sharp, F$. \item 
The petal vertex at the far right on the other hand has $\{A,C^\sharp,F,A^\flat, C, E\}$ associated to it (an augmented triad and its translation by a semitone). 
\end{itemize}
These are the only two possibilities arise for the petal vertices, as is easy to check. A stick figure at a vertex shows when \textit{two} sides of a single triangle contribute their notes to that vertex (if the triangle contains the head of the figure), and when just one of the sides contributes (if there is an arm or a leg) and when none of them do. For such a vertex either the six associated notes will be clustered or there will be three notes separated by whole-tones. For example we have the following.
\begin{itemize}
\item  Associated to the vertex to the right of the central vertex, we have $\{B^\flat, B,C,D^\flat,D,D^\sharp\}$.
The $B$ and $D$ come from the $G$-major triangle containing the head, the $B^\flat$ and $D^\sharp$ come from the legs of the stick figure. 
\item 
The upright stick figure below and to the left of the previous one has $\{F^\sharp, G, A^\flat, A, B^\flat, B\}$ associated to it, with $F^\sharp$ and $B$ coming from the $B$-major triangle at the head, while the legs contribute $G$ and $B^\flat$. 
\item 
The stick-figure vertices along the outside all have triads consisting of whole-tone steps associated to them. For example, the multi-set associated to the upper right-hand stick-figure vertex is $\{G,G,A,A,B,B\}$. 
\end{itemize}
We also note the following rotational symmetry. Namely, clockwise rotation by $120^\circ$ about any `petal' vertex corresponds to raising every note by a major third. 

\begin{figure}\centering
\includegraphics[scale=0.17]{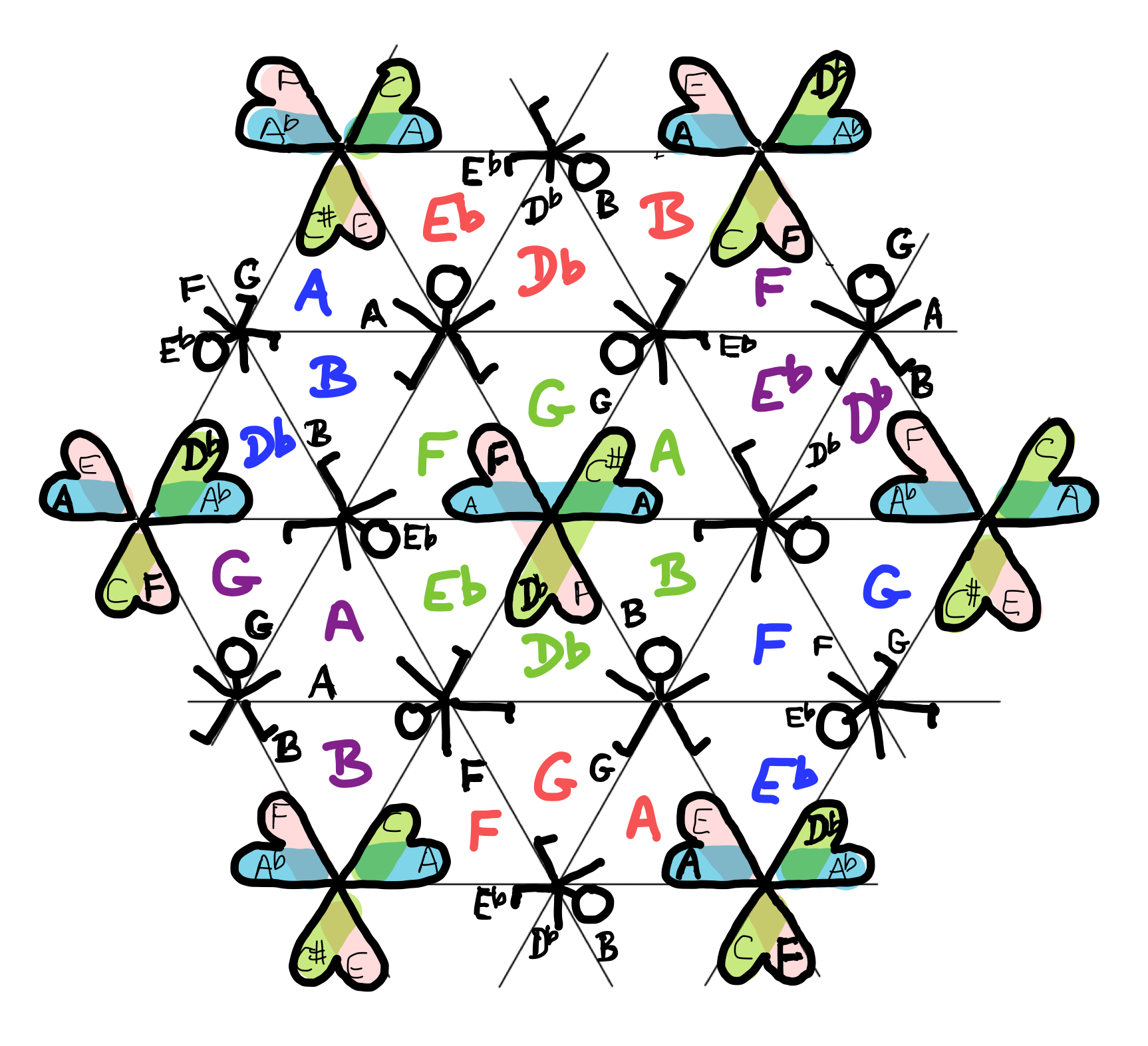}\\
\caption{A (stylised) depiction of the major tritone edge-tonnetz  from Figure~\ref{f:tritone2}}\label{f:tritone2hand}
\end{figure}

Finally, we observe that this tonnetz is again a periodic tiling of the plane, and opposite edges of the now enlarged fundamental hexagon can be identified to give a tonnetz on a torus. Note that this fundamental domain differs not only from the one in Figure~\ref{f:tritone1}, but also from the one of Euler's tonnetz, compare Figure~\ref{f:Euler}. This is despite the fact that both have fundamental domains consisting of $24$ triangles. The fundamental domain in Figure~\ref{f:tritone2} is remarkable in that it has a simply transitive on alcoves action of the group $S_4$, which relates it to the sphere in Section~\ref{s:sphere}.

\section{An edge-tonnetz on a sphere - the jazz bauble}\label{s:sphere}
All the constructions so far have been in the plane or on a torus. We now consider instead a sphere. There is already an example of a vertex tonnetz on the tetrahedron in the literature, see \cite[Figure 29]{Tymoczko12} and \cite[Figure~23]{Yust20}. This is given by labelling the vertices of the tetrahedron by $C, E\flat, F\sharp$ and $A$, equivalent to a diminished seventh chord. As a result each face represents a diminished triad, making this a diminished vertex-tonnetz in our terminology. 

Our final tonnetz is an edge-tonnetz constructed on a triangulation of a sphere with $24$ triangles. To describe the triangulation, map the sphere to a tetrahedron and now consider the four reflection hyperplanes that are each perpendicular to an edge. Refine the triangulation of the tetrahedron by adding in the intersection lines coming from intersecting with these hyperplanes. Each side of the tetrahedron gets divided up into $6$ smaller triangles, so that the four sides together give a triangulation with $24$ triangles. The idea to construct a tonnetz on the sphere with this triangulation was inspired by the observation found in \cite[Section~5.1]{FT-JLMS}, that the triangulated torus in Figure~\ref{f:tritone2} is a ramified double-cover of this sphere, with each triangle of the triangulated torus mapping to a compound triangle of the sphere (what are the equilateral triangles in Figure~\ref{f:sphere2}).

We describe this tonnetz in Figure~\ref{f:sphere1} using a net of the tetrahedron, where gray edges indicate where the folds should be. Our final tonnetz does not reproduce minor and major triads, like previous ones,  but it has another remarkable property. Consider the $12$ equilateral bold triangles in the net, which are the equilateral triangles that are folded in half when putting together the tetrahedron (and are really quadrilaterals from the point of view of our triangulation). Each one of these has precisely $5$ notes attached to it, and they are the notes of a major ninth chord. Moreover, each major ninth chord appears exactly once. For example, the edges involved in the bottom-most equilateral triangle in the lower right-hand corner are labelled by the notes of the major ninth chord based on $C$:
\[
C,E,G,B,D.
\] 
The arrangement of the major ninth chords can be understood as follows. The tetrahedron has four corners, labelled $X,Y,Z$ and $W$ in Figure~\ref{f:sphere2}, and around each one there are three major ninth chords. 
\begin{itemize}
\item The vertex $X$ has the chords based on $F,F\sharp$ and $G$ adjacent to it on its three faces.
\item The vertex $Y$ has the chords based on $A,B\flat$ and $B$ adjacent to it on its three faces.
\item The vertex $Z$ has the chords based on $D\flat,D$ and $E\flat$ adjacent to it on its three faces.
\item The final vertex, $W$, has the remaining chords, that are based on $C,E,A\flat$. 
\end{itemize}

The tonnetz has a symmetry of rotation around the axis through $W$. Namely, rotating by $120^\circ$ anticlockwise about the center of the net corresponds to transposition up by a major third.

\begin{figure}
\begin{center}
\begin{tikzpicture}[scale=0.7,
  xscale=3,yscale=5.199,
  note/.style={
  minimum size=0.75cm},
  every node/.append style={font=\footnotesize}
  ]




\draw [gray] (2.5,1)--(2.5,0);
\draw [gray](1,0.5)--(2.5,1);
\draw [gray](1,0.5)-- (2.5,0);

\draw[thick] (0.5,0)--(2,1.5);
\draw[thick] (0.5,0)--(3.5,0);
\draw[thick] (3.5,0)--(2,1.5);

\draw[thick] (1,0.5)--(4,0.5);
\draw[thick] (3,0.5)--(2.5,0)--(1,1.5);
\draw[thick] (2.5,1)--(1,-.5);

\draw[thick] (1,-0.5)--(0.5,0);  
\draw[gray] (1,-0.5)--(1,0.5);
\draw[thick] (1,0.5)--(1.5,0);
\draw[gray](2.5,0)--(4,0.5);
\draw[thick](4,0.5)--(3.5,0);
\draw[thick](1.5,1)--(2.5,1);
\draw[thick](1,1.5)--(2,1.5);
\draw[gray](2.5,1)--(1,1.5);

\draw (0.75,0) node[rectangle,fill=white!10]{ $D$}; 
\draw (1.25,0) node[rectangle,fill=white!10]{ $B$}; 
\draw (2,0) node[rectangle,fill=white!10]{ $B$}; 
\draw (3,0) node[rectangle,fill=white!10]{ $B$}; 

\draw (1.35,0.5) node[rectangle,fill=white!10]{ $E\sharp$}; 
\draw (1.65,0.5) node[rectangle,fill=white!10]{ $F$}; 
\draw (2.25,0.5) node[rectangle,fill=white!10]{ $F$}; 
\draw (2.75,0.5) node[rectangle,fill=white!10]{ $C$}; 
\draw (3.5,0.5) node[rectangle,fill=white!10]{ $G\sharp$}; 

\draw (2,1) node[rectangle,fill=white!10]{ $D$};

\draw (0.75,0.25) node[rectangle,fill=white!10]{ $G$}; 
\draw (1.25,0.25) node[rectangle,fill=white!10]{ $F\sharp$}; 
\draw (1.5,0.35) node[rectangle,fill=white!10]{ $A\sharp$}; 
\draw (2.1,0.15) node[rectangle,fill=white!10]{ $E$}; 
\draw (2.25,0.25) node[rectangle,fill=white!10]{ $A$}; 
\draw (2.68,0.2) node[rectangle,fill=white!10]{ $B\flat$};
\draw (2.85,0.35) node[rectangle,fill=white!10]{ $A\sharp$};
\draw (3.75,0.25) node[rectangle,fill=white!10]{ $E$}; 
\draw (1.65,0.15) node[rectangle,fill=white!10]{ $G\sharp$};
\draw (1.85,0.35) node[rectangle,fill=white!10]{ $C\sharp$};

\draw (1,0.2) node[rectangle,fill=white!10]{ $A$};

\draw (1,-0.2) node[rectangle,fill=white!10]{ $G$};

\draw (0.75,-0.25) node[rectangle,fill=white!10]{ $C$};
\draw (1.25,-0.25) node[rectangle,fill=white!10]{ $E$};

\draw (3,0.15) node[rectangle,fill=white!10]{ $C\sharp$};
\draw (3.5,0.33) node[rectangle,fill=white!10]{ $B$};

\draw (3.15,0.35) node[rectangle,fill=white!10]{ $D\sharp$};
\draw (3.35,0.15) node[rectangle,fill=white!10]{ $F\sharp$};

\draw (2.5,0.35) node[rectangle,fill=white!10]{ $D$};

\draw (1.25,1.25) node[rectangle,fill=white!10]{ $C$};
\draw (2.25,1.25) node[rectangle,fill=white!10]{ $E\flat$};

\draw (1.85,1.35) node[rectangle,fill=white!10]{ $B\flat$};

\draw (1.55,1.34) node[rectangle,fill=white!10]{ $E\flat$};

\draw (1.62,1.14) node[rectangle,fill=white!10]{ $G$};

\draw (2.1,1.14) node[rectangle,fill=white!10]{ $F$};

\draw (1.5,1.5) node[rectangle,fill=white!10]{ $A\flat$};

\draw (2,0.83) node[rectangle,fill=white!10]{ $F\sharp$};
\draw (1.25,0.75) node[rectangle,fill=white!10]{ $G$};
\draw (2.2,0.65) node[rectangle,fill=white!10]{ $C\sharp$};
\draw (2.3,0.8) node[rectangle,fill=white!10]{ $D\flat$};
\draw (2.75,0.75) node[rectangle,fill=white!10]{ $E\flat$};
\draw (2.5,0.65) node[rectangle,fill=white!10]{ $A\flat$};

\draw (1.5,0.66) node[rectangle,fill=white!10]{ $C$};
\draw (1.85,0.65) node[rectangle,fill=white!10]{ $A$};

\draw (1.6,0.85) node[rectangle,fill=white!10]{ $E$};







\end{tikzpicture}
\end{center}
\caption {Folding along the lighter-coloured edges and gluing the outer border according to (enharmonically equivalent) notes, gives an edge-tonnetz on a tetrahedron. 
}
\label{f:sphere1}
\end{figure}
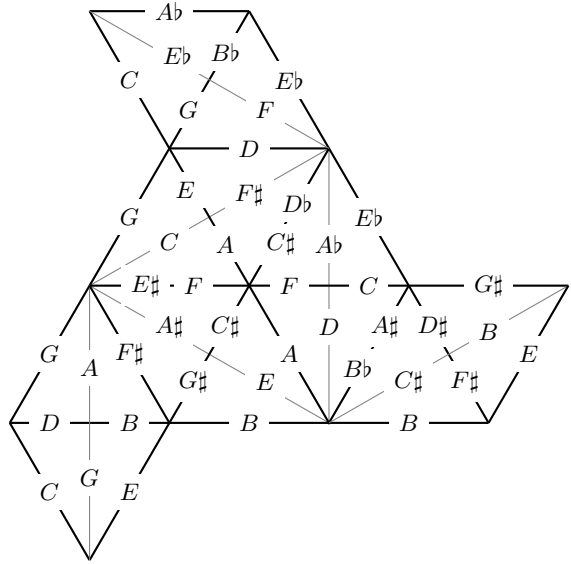
  
In Figure~\ref{f:sphere2} the pattern of chords is shown. It is interesting to note that, while the base note is always on one of the  long outer edges of its equilateral triangle, the remaining four notes of the ninth chord are arranged in different ways, as indicated by the different colours of the circles. Amongst these, the yellow chords, $D\flat, F$ and $A$ are arranged in the most harmonic way, with the corresponding major triad based along the inner sub-triangle and a stacked major triad starting on the $5$-th along the outer sub-triangle.

\begin{figure}
\begin{center}
\begin{tikzpicture}[scale=0.7,
  xscale=3,yscale=5.199,
  note/.style={
  minimum size=0.75cm},
  every node/.append style={font=\footnotesize}
  ]




\node (W) [left] at (1,1.5) {$W$};
\node (W) [left] at (1,-0.5) {$W$};
\node (W)[right] at (4,0.5) {$W$};
\node (X)[left] at (1,0.5) {$X$};
\node (Y)[below] at (2.5,0) {$Y$};
\node (Z)[right] at (2.5,1) {$Z$};

\draw [gray] (2.5,1)--(2.5,0);
\draw [gray](1,0.5)--(2.5,1);
\draw [gray](1,0.5)-- (2.5,0);

\draw[thick] (0.5,0)--(2,1.5);
\draw[thick] (0.5,0)--(3.5,0);
\draw[thick] (3.5,0)--(2,1.5);

\draw[thick] (1,0.5)--(4,0.5);
\draw[thick] (3,0.5)--(2.5,0)--(1,1.5);
\draw[thick] (2.5,1)--(1,-.5);

\draw[thick] (1,-0.5)--(0.5,0);  
\draw[gray] (1,-0.5)--(1,0.5);
\draw[thick] (1,0.5)--(1.5,0);
\draw[gray](2.5,0)--(4,0.5);
\draw[thick](4,0.5)--(3.5,0);
\draw[thick](1.5,1)--(2.5,1);
\draw[thick](1,1.5)--(2,1.5);
\draw[gray](2.5,1)--(1,1.5);

\draw (1,-0.2) node[draw,circle,fill=lightgray!20]{ $C$}; 
\draw (1,0.17) node[draw,circle,fill=white!10]{ $G$};
\draw (2,0.16) node[draw,circle,fill=yellow!20]{ $A$}; 
\draw (3,0.17) node[draw,circle,fill=white!10]{ $B$}; 
\draw (2.5,0.33) node[draw,circle,fill=darkgray!20]{ $B\flat$}; 
\draw (3.5,0.33) node[draw,circle,fill=lightgray!20]{ $E$}; 
\draw (1.5,0.33) node[draw,circle,fill=darkgray!20]{ $F\sharp$}; 
\draw (2,0.81) node[draw,circle,fill=darkgray!20]{ $D$}; 
\draw (2.5,0.67) node[draw,circle,fill=yellow!20]{ $D\flat$}; 
\draw (1.5,0.66) node[draw,circle,fill=yellow!20]{ $F$}; 
\draw (2,1.16) node[draw,circle,fill=white!10]{ $E\flat$}; 
\draw (1.5,1.33) node[draw,circle,fill=lightgray!20]{ $A\flat$}; 

\end{tikzpicture}
\end{center}
\caption {In Figure~\ref{f:sphere1}, two triangles together form a major $9$th chord with the base note shown here.}\label{f:sphere2}
\end{figure}
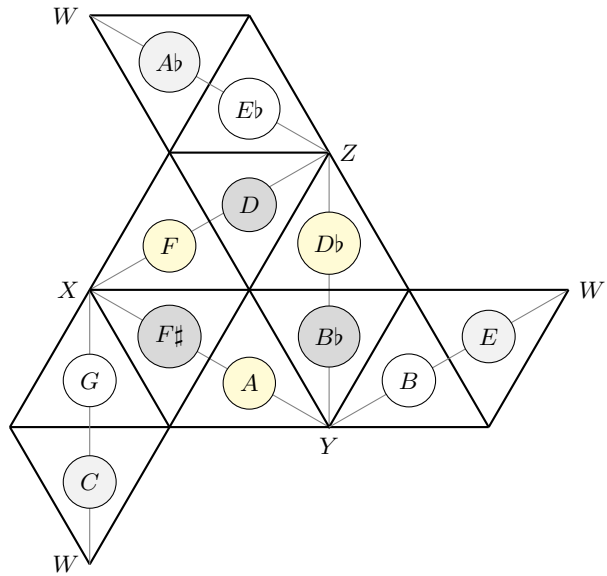
    
  \clearpage
\section{Overview of chords appearing in our type $B_2$ and $G_2$ tonnetz examples}\label{s:table}
The following table shows the chords that are found in the examples from Sections~\ref{s:BC} and \ref{s:G2}.~\begin{table}[h]
  \begin{center}
    \label{tab:table1}
    \begin{tabular}{l|c|c|c|c}
      \textbf{} & \textbf{\dynkin[edge length=.8cm,
root radius=.09cm,labels={A,F}]{B}{2}}&\textbf{\dynkin[edge length=.8cm,
root radius=.09cm,labels={A,F}]{C}{2}}& \textbf{\dynkin[edge length=.8cm,
root radius=.09cm,labels={A,D}]{G}{2}} & \textbf{\dynkin[edge length=.8cm,
root radius=.09cm,labels={D,A}]{G}{2}}\\
\hline\hline
    \textbf{major triads}&&&&\\
    \hline
\textit{faces:}  &$D$-major& & $D$-major & $A$-major \\
 &$B$-major&& $E$-major & $B$-major \\
 && & $F\sharp$-major & $D\flat$-major \\
 &$A\flat$-major&& $A\flat$-major & $E\flat$-major \\
 &$F$-major&& $B\flat$-major & $F$-major \\
 &&& $C$-major & $G$-major \\
    \hline \hline
   \textbf{minor triads} &&&&\\
    \hline
\textit{faces:}  &&$D$-minor & $A$-minor & $D$-minor \\
&& $B$-minor& $B$-minor & $E$-minor \\
 && & $C\sharp$-minor & $F\sharp$-minor \\
 &&$G\sharp$-minor& $E\flat$-minor & $G\sharp$-minor \\
 &&$F$-minor& $F$-minor & $B\flat$-minor \\
 &&& $G$-minor & $C$-minor \\
     \hline
      \hline
      \textit{all edges: } & $\mathcal N\setminus\{A\sharp, C\sharp,E,G\}$&$\mathcal N\setminus\{A\sharp, C\sharp,E,G\}$& $\mathcal N$ & $\mathcal N$ \\
      \hline\hline
     \textbf{augmented triads} &&&  &  \\
      \hline
   \textit{ valence $6$ vertices:}&& & $C,E,G\sharp$ & $F,A,C\sharp$\\
    && & $D, F\sharp, A\sharp$ & $G,B,D\sharp$\\
        \hline
         \hline
   \textbf{diminished seventh}&&&&\\     
            \hline
   \textit{ valence $4$ vertices:} &$A,C,E\flat,G\flat$&& $A,C,E\flat,G\flat$& $A,C,E\flat,G\flat$\\
   && & $A\sharp,C\sharp,E,G$ & $A\sharp,C\sharp,E,G$ \\
       &&  $B,D,F,A\flat$& $B,D,F,A\flat$  & $B,D,F,A\flat$\\
           \hline
      \hline
         \textbf{anything else}&&&&\\     
            \hline
 \textit{valence $8$ vertices:}&$\mathcal N\setminus\{A\sharp, C\sharp,E,G\}$ & $\mathcal N\setminus\{A\sharp, C\sharp,E,G\}$& & \\
 \textit{valence $12$ vertices:}& && &  $\mathcal N$\\
\hline      \hline
      \end{tabular}
  \end{center}
\end{table}

\newpage 
\bibliographystyle{amsalpha}

\begin{thebibliography}{99}\small\parskip=0cm \itemsep=0.25\baselineskip

\bibitem[Can18]{CannasThesis}
Sonia Cannas,
\newblock
Geometric Representation and Algebraic Formalisation of Musical Structures
\newblock
PhD Thesis, University of Strasbourg (2018)


\bibitem[Ca11]{Catanzaro-tonnetze} Michael J. Catanzaro, 
\newblock
Generalized Tonnetze,
\newblock 
Journal of Mathematics and Music, 5:2, 117--139 (2011) DOI: 10.1080/17459737.2011.614448

\bibitem[Co97]{Cohn}
Richard Cohn,
\newblock
Neo-Riemannian Operations, Parsimonious Trichords, and Their ``Tonnetz" Representations,
\newblock
Journal of Music Theory, Vol. 41, No. 1 (Spring, 1997), pp. 1--66

\bibitem[Co98]{Cohn98}
Richard Cohn, 
\newblock 
Introduction to Neo-Riemannian Theory: A Survey and a Historical Perspective,
\newblock Journal of Music Theory, Vol. 42, No. 2.,  167 -- 180 (1998)


\bibitem[Eu1739]{EulerTantamen}
Leonhard Euler,
\newblock
Tentamen novae theoriae musicae ex certissismis harmoniae principiis dilucide expositae,
Saint Petersburg Academy. p. 147 (1739)
\newblock Scan available at
  \texttt{http://eulerarchive.maa.org/backup/E033.html}



\bibitem[FT16]{FT-JLMS}
Anna Felikson and Pavel Tumarkin,
\newblock
Coxeter groups, quiver mutations and geometric manifolds,
\newblock
 J. Lond. Math. Soc., II. Ser. 94, No. 1, 38--60 (2016)

\bibitem[GX13]{GallierXu}
Jean Gallier and Dianna Xu,
\newblock
A Guide to the Classification Theorem for Compact Surfaces,
\newblock
Geometry and Computing, Springer Verlag, XII 178pp (2013)


\bibitem[Hi82]{HillerBook}
Howard Hiller, 
\newblock
Geometry of Coxeter groups, 
\newblock
Research Notes in Mathematics, 54, Pitman,  (1982)

\bibitem[Hu90]{HumphreysBook}
James E. Humphreys,
\newblock
Reflection Groups and Coxeter Groups,
\newblock
Cambridge University Press
(1990)


\bibitem[Kir08]{KirillovBook}
Alexander Kirillov, Jr, 
\newblock
An Introduction to Lie Groups and Lie Algebras,
\newblock
Cambridge University Press
(2008)


\bibitem[Yu20]{Yust20}
Jason Yust,
\newblock
Generalized Tonnetze and Zeitnetze, and the topology of music concepts,
\newblock
Journal of Mathematics and Music, 14:2, 170--203

\bibitem[Lew87]{Lewin87}
David Lewin,
\newblock  Generalized Musical Intervals and Transformations,
\newblock Yale University
Press, New Haven and London (1987)

\bibitem[Oet1866]{Oettingen}
\newblock
 Arthur von. Oettingen,
 \newblock Harmoniesystem in dualer Entwicklung,  Studien zur Theorie der Musik, 
\newblock 
Dorpat, 1866.

\bibitem[Pe03]{Peck}
\newblock
Peck, Robert W.,
\newblock
Klein-Bottle Tonnetze,
\newblock
Music Theory Online
The Online Journal of the
Society for Music Theory,
Volume 9, Number 3, (2003)

\bibitem[Pio13]{Piovan}
\newblock
Luis A. Piovan,
\newblock A Tonnetz model for pentachords,
\newblock Journal of Mathematics and Music, 7:1, 29--53  (2013) DOI: 10.1080/17459737.2013.769637


\bibitem[Riem1880]{Riemann}
\newblock
Riemann, Hugo: Skizze einer Neuen Methode der 
Harmonielehre,
\newblock Breitkopf und
Haertel, Leipzig (1880)

\bibitem[Schm19]{Schmidmeier}
Markus Schmidmeier, 
\newblock 
 From Schritte and Wechsel to Coxeter Groups, 
\newblock
In: Montiel, M., Gomez-Martin, F., Agustín-Aquino, O.A. (eds) Mathematics and Computation in Music. MCM 2019. Lecture Notes in Computer Science, vol 11502. Springer, Cham. (2019).
\newblock
\texttt{https://doi.org/10.1007/978-3-030-21392-3 9}


\bibitem[Tym11]{TymoczkoBook1}
Dmitri Tymoczko, 
\newblock 
A Geometry of Music: Harmony and Counterpoint in the Extended Common Practice,
\newblock
Oxford University Press, Oxford (2011)


\bibitem[Tym12]{Tymoczko12}
Dmitri Tymoczko, 
\newblock
The Generalized Tonnetz, 
\newblock 
Journal of Music Theory, vol. 56, no.1.: 1-- 52 (2012) \newblock
\texttt{https://doi.org/10.1215/00222909-1546958}



\end{thebibliography}

\end{document}